\newtheorem{theorem}{Theorem}[section]
\newtheorem{conjecture}[theorem]{Conjecture}
\newtheorem{corollary}[theorem]{Corollary}
\newtheorem{fact}[theorem]{Fact}
\newtheorem{lemma}[theorem]{Lemma}
\newtheorem{proposition}[theorem]{Proposition}
\theoremstyle{definition}
\newtheorem{remark}[theorem]{Remark}
\newcommand{\Qq}{{\mathbb{Q}}}
\newcommand{\CL}{{\mathcal L}}
\newcommand{\CN}{{\mathcal N}}
\newcommand{\CM}{{\mathcal M}}
\newcommand{\CO}{{\mathcal O}}
\renewcommand{\phi}{\varphi}
\long\def\symbolfootnote[#1]#2{\begingroup%
\def\thefootnote{\fnsymbol{footnote}}\footnote[#1]{#2}\endgroup}
\def\Ind#1#2{#1\setbox0=\hbox{$#1x$}\kern\wd0\hbox to 0pt{\hss$#1\mid$\hss}
\lower.9\ht0\hbox to 0pt{\hss$#1\smile$\hss}\kern\wd0}
\def\Notind#1#2{#1\setbox0=\hbox{$#1x$}\kern\wd0\hbox to 0pt{\mathchardef
\nn=12854\hss$#1\nn$\kern1.4\wd0\hss}\hbox to
0pt{\hss$#1\mid$\hss}\lower.9\ht0 \hbox to
0pt{\hss$#1\smile$\hss}\kern\wd0}
\def\dprk{\textrm{dp-rk}}
\title{A conjectural classification of strongly dependent fields}
\date{\today}
\author[Y. Halevi]{Yatir Halevi$^*$}
\thanks{$^*$Partially supported by the European Research Council grant 338821, by ISF grant No. 181/16 and ISF grant No. 1382/15.}
\address{$^*$Einstein Institute of Mathematics\\
	The Hebrew University of Jerusalem\\
	Givat Ram\\
	Jerusalem 91904\\
	Israel\\}
\email{yatir.halevi@mail.huji.ac.il}
\urladdr{http://ma.huji.ac.il/\textasciitilde yatirh/}
\author[A. Hasson]{Assaf Hasson$^\dagger$}
\thanks{$^\dagger$ Supported by ISF grant No. 181/16}
\address{$^\dagger$Department of mathematics\\
	Ben Gurion University of the Negev\\
	Be'er Sehva\\
	Israel} \email{hassonas@math.bgu.ac.il} \urladdr{http://www.math.bgu.ac.il/\textasciitilde hasson/}
\author[F. Jahnke]{Franziska Jahnke$^\ddagger$}
\thanks{$^\ddagger$ Partially supported by SFB 878}
\address{$^\ddagger$ Westf\"{a}lische Wilhelms-Universit\"{a}t M\"unster, 
	Institut f\"ur Mathematische Logik und Grundlagenforschung, 
	Einsteinstr. 62, 48149 M\"unster,
	Germany}\email{franziska.jahnke@uni-muenster.de}
\begin{document}

\begin{abstract}
	We survey the history of Shelah's conjecture on strongly dependent fields, give an equivalent formulation in terms of a classification of strongly dependent fields and prove that the conjecture implies that every strongly dependent field has finite dp-rank.
\end{abstract}

\maketitle

\section{introduction}

The algebraic classification of algebraic structures satisfying various model theoretic assumptions is a fruitful line of research in model theory for almost half a century. It originates with Macintyre's proof, \cite{Mac71}, that $\omega$-stable fields are algebraically closed, later generalized by Cherlin and Shelah to superstable fields, \cite{ChSh}, and with Reineke's proof, \cite{ReiMG}, that $\omega$-stable minimal groups are commutative. The classification of o-minimal ordered rings (they are real closed fields, \cite{PilStei1}), of weakly o-minimal fields (they are also real closed, \cite{MacMaSt}),  of C-minimal fields (they are algebraically closed valued fields, \cite{HasMac}) and of $\omega$-stable groups of ranks 2 (they are solvable by finite, \cite{Ch1}) are just a few examples of other results of a similar flavour. 

Some of the best known and long standing open problems in model theory concern such classification problems: 

\begin{conjecture}
	\begin{enumerate}
		\item \textbf{The algebraicity conjecture} Every simple group of finite Morley rank is an algebraic group. 
		\item \textbf{The stable fields conjecture}: Every infinite  stable field is separably closed. 
		\item \textbf{The simple fields conjecture} Every infinite (super)-simple field is a (perfect) PAC field. 
	\end{enumerate}
\end{conjecture}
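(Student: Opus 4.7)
All three parts of the conjecture are long-standing open problems, so what follows is a sketch of the natural lines of attack and where each currently stalls, rather than a literal proof plan.

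For (2), let $K$ be an infinite stable field. The first step is to verify that $K$ is perfect, which in positive characteristic follows from the absence of Artin--Schreier extensions (as in the Kaplan--Scanlon--Wagner theorem, later extended to the NIP setting). The second step is to rule out any non-trivial finite Galois extension $L/K$: given such an $L$, one exploits that $L$ is again stable and that $\gal(L/K)$ is interpretable in $K^{\eq}$, and then tries to produce a definable non-trivial valuation or ordering on $K$, contradicting stability (ordered stable fields being degenerate, and stable fields admitting a non-trivial definable valuation being conjecturally trivial). The main obstacle is precisely this last step: there is no known model-theoretic mechanism that turns an algebraic extension of a stable field into a definable valuation or ordering.

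For (1), the standard plan is the Borovik programme: decompose a simple group $G$ of finite Morley rank according to the type of its Sylow $2$-subgroup (even, mixed, odd, degenerate). Even and mixed types are settled by Altınel--Borovik--Cherlin (producing an algebraic group in characteristic $2$, or ruling the case out, respectively), so the main obstacles are the odd case and, especially, the degenerate case, the latter amounting to excluding the existence of a simple ``bad'' group of finite Morley rank containing no involutions.

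For (3), the plan is to start from a supersimple field $K$, use the now-standard results that $K$ is perfect and bounded (i.e.\ has a small absolute Galois group), and then attempt to establish PAC-ness, i.e.\ that every smooth absolutely irreducible variety over $K$ has a $K$-rational point. The natural tools are generic types in simple theories together with independent $n$-amalgamation to produce the required rational points. The serious obstacle is that no purely simple-theoretic characterisation of PAC fields is currently available, so bridging the gap between boundedness and PAC requires genuinely new geometric or amalgamation-theoretic input.
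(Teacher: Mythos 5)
These three statements are presented in the paper explicitly as long-standing open problems; the paper offers no proof of any of them, and indeed remarks that little progress has been made on the field conjectures for decades. Your proposal correctly recognises this and, rather than claiming a proof, gives an accurate survey of the standard attack routes (Artin--Schreier closedness and the missing valuation-construction step for stable fields, the Borovik programme and the degenerate/odd-type obstruction for the algebraicity conjecture, boundedness versus PAC-ness for supersimple fields) together with where each stalls. This is consistent with the status of the statement in the paper, so there is nothing to compare: neither you nor the authors prove the conjecture, and no gap can be charged to you beyond the openness of the problems themselves.
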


For several decades little (if any) progress has been made on the last two of these conjectures (Duret, in \cite{duret}, has shown that the simple fields conjecture implies the stable fields conjecture because a PAC field is stable if and only if it is separably closed), and the algebraicity conjecture is still wide open. 

In the early 2000s Shelah started exploring the problem of finding, among theories without the independence property (also known as dependent theories or NIP theories), a dividing line analogous to super-stability among stable theories (\cite{Sh783}, \cite{Sh863}). From this research Shelah extracted several sub-classes of dependent theories, of which strongly dependent theories seemed one of the most natural and interesting division lines. 

As a test for the appeal of this division line Shelah suggested in \cite[Conjecture 5.34]{Sh863} the conjecture that infinite strongly dependent fields are algebraically closed, real closed, or elementary equivalent to a field admitting a valuation $v$, with strongly dependent value group and residue field, that eliminates field quantifiers in some Denef-Pas language, or a finite algebraic extension of the latter.

It is widely believed that every valued field admitting elimination of field quantifiers in a Denef-Pas language is henselian\footnote{S. Rideau and I. Halupczok informed us (private communication) that this claim is proved in equicharacteristic $0$ in a yet unpublished work with R. Cluckers.}. Thus, with time, 
this conjecture converged into the following more algebraic formulation: 
\begin{conjecture}\label{Shelah}
	Any infinite strongly dependent field which is neither real closed nor algebraically closed admits a non-trivial henselian valuation. 
\end{conjecture}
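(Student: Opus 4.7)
The plan is to attack this (still open) conjecture by trying to construct a non-trivial henselian valuation on an arbitrary infinite strongly dependent field $K$ that is neither algebraically closed nor real closed. The overall strategy mirrors the attack on the analogous statement for dp-finite fields by Johnson and others, but without the a priori dp-rank bound. The first step would be to reduce to proving that the absolute Galois group $G_K$ is non-trivial in a \emph{definable} way: since $K$ is neither algebraically nor real closed, $G_K$ is non-trivial, and one is not forced into the real-closed scenario via Artin-Schreier. Using strong dependence on $(K^\times,\cdot)$ and on $(K,+)$, together with Kaplan-Scanlon-Wagner (which tells us that $K=\wp(K)$ in positive characteristic), one should be able to isolate a prime $p$ for which the pro-$p$ quotient of $G_K$ is non-trivial and of bounded complexity, e.g.\ witnessed by $[K^\times : (K^\times)^p] > 1$ or by a non-trivial finite Galois $p$-extension.

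The second step is to convert this Galois-cohomological data into a non-trivial $p$-henselian valuation via the Koenigsmann/Jahnke-Koenigsmann machinery: a sufficiently rigid pro-$p$ quotient of $G_K$ yields a canonical non-trivial $p$-henselian valuation on $K$, and under NIP this valuation should be $\emptyset$-definable in the ring language. One then promotes $p$-henselianity to full henselianity by intersecting the canonical $p$-henselian valuation rings across the primes compatible with the Galois-theoretic input, using that intersections of pairwise comparable henselian valuation rings are henselian.

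The main obstacle, in my view, is the first step: extracting any finiteness or definable-smallness statement about indices from strong dependence alone is essentially the content of the conjecture itself, as the paper's main theorem (that the conjecture implies finite dp-rank) makes formal. Any direct attack must therefore either bootstrap from Johnson's dp-finite classification by a delicate induction on rank-like invariants, or produce the valuation directly from $1$-variable definable sets without first passing through dp-rank. I expect the most delicate case to be mixed characteristic, where the $p$-power structure of the residue field and of the value group interact most intricately, and where the available definable-henselianity results are weakest.
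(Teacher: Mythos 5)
There is nothing in the paper to compare your proposal against: the statement you are addressing is Conjecture~2, which the paper explicitly leaves open (``While Conjecture \ref{Shelah} remains wide open, despite Johnson's breakthrough\dots''). The paper's actual theorems concern what the conjecture \emph{implies} (the classification in Conjecture~\ref{NewConj}, finiteness of dp-rank), not a proof of the conjecture itself. Your proposal is likewise not a proof but a research outline, and its essential first step is one you yourself concede begs the question: extracting from strong dependence alone that some index $[K^\times:(K^\times)^p]$ is finite and non-trivial, or that the Koenigsmann family $\CN_G$ generates a $V$-topology, is precisely the open content of the problem. The paper frames this exact step (``proving that an infinite field not satisfying $\Psi$ has the independence property'') as the missing ingredient, and Kaplan--Scanlon--Wagner plus NIP of $(K^\times,\cdot)$ gives you Artin--Schreier closedness but no finiteness or smallness statement about $p$-th power classes. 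So the proposal has a genuine gap at its foundation, not merely a technical one.

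Your second step also contains an error you should not gloss over. A $p$-henselian valuation for a single prime $p$ is in general very far from henselian, and the proposed ``promotion by intersecting the canonical $p$-henselian valuation rings across primes'' does not work: the canonical $p$-henselian valuation rings for distinct primes need not be pairwise comparable, and even for a chain, the fact that an intersection of a descending chain of \emph{henselian} valuation rings is henselian does not transfer to rings that are each only $p$-henselian for varying $p$. Moreover, Koenigsmann's criterion requires a primitive $p$-th root of unity, so one must first pass to a finite Galois extension $L/K$ and then descend the resulting valuation to $K$; that descent (and the definability claims you attach to it) is itself nontrivial and, in the NIP setting, is exactly the kind of statement the paper only obtains \emph{assuming} the conjecture (via \cite[Proposition 5.3]{NIPthen} and the henselianity of definable valuations on dependent fields). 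If you want to contribute something provable here, the realistic targets are the conditional results the paper actually establishes: the equivalence of Conjecture \ref{Shelah} with the classification in Conjecture \ref{NewConj}, or Johnson's unconditional dp-minimal case.
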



As the understanding of dependent fields expanded, it was noticed that there are no known counter examples to the much stronger: 
\begin{conjecture}\label{ShelahNIP}
	Any infinite dependent field which is neither real closed nor separably closed admits a non-trivial henselian valuation. 
\end{conjecture}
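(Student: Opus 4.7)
The plan is to emulate the reduction strategy that has succeeded for \emph{dp-finite} fields (notably in recent work of Johnson) and attempt to extend it to the full NIP setting. After passing to a sufficiently saturated model $K$, the overall goal is to produce, from the NIP hypothesis and the failure of real/separable closure, a non-trivial definable field topology on $K$, and then upgrade that topology to a henselian valuation.

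First, I would try to extract a definable subgroup $H$ of $(K,+)$ which is neither everything nor trivial. NIP guarantees an abundance of externally definable sets (via the Shelah expansion), and the hypothesis that $K$ is not separably/real closed should preclude some trivial behaviour --- for instance, the existence of an element algebraic over a parameter set but not in its definable closure should give rise to useful definable subgroups via Galois conjugates. Once $H$ is in hand, I would stabilise it by intersecting with its translates $\lambda H$ for $\lambda$ ranging over a suitable subset of $K^{\times}$, to obtain a subring (or at least a subgroup well-behaved under the multiplicative structure).

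Second, using the Prestel--Ziegler criteria for V-topologies, I would verify that $\{\lambda H : \lambda \in K^{\times}\}$ forms a neighbourhood basis at zero for a V-topology. Since $K$ is assumed to be neither real closed nor algebraically closed, Ostrowski-type arguments rule out the Archimedean case, so the V-topology must be induced by a non-trivial valuation $v$. To show $v$ is henselian, I would mimic Krasner's lemma: definable good approximations to roots of polynomials should lift to exact roots, exploiting that in NIP theories fine-grained definable neighbourhoods exist and their intersections are well-behaved; alternatively one may appeal to known criteria guaranteeing that a sufficiently definable V-topology on a field with mild algebraic restrictions is induced by a henselian valuation.

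The principal obstacle --- indeed, the reason this remains a long-standing open conjecture --- is the first step: producing a non-trivial definable subgroup of $(K,+)$ from pure NIP, without a dp-finiteness crutch. Johnson's argument in the dp-finite setting proceeds by induction on dp-rank using ``infinitesimal'' types whose loci are definable subgroups, and no such inductive handle is available in general NIP. Circumventing this would presumably require a genuinely new idea, perhaps through the arithmetic structure of $K^{\times}/(K^{\times})^{n}$, through finite-dimensional quotients of the absolute Galois group of $K$, or through an analysis of the lattice of relatively algebraically closed NIP subfields. Even granting step one, upgrading the resulting V-topology to a \emph{henselian} valuation is itself non-trivial and is currently known only under additional definability or tameness hypotheses; so any realistic attack should probably first secure Conjecture~\ref{Shelah} (via strong dependence) and then bootstrap to the NIP case by an independent argument showing that a non-strongly-dependent NIP field would inherit a non-trivial henselian valuation from its strongly dependent reducts or definable subfields.
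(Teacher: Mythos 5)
The statement you were asked about is Conjecture \ref{ShelahNIP}, which the paper explicitly presents as an \emph{open conjecture}: it has no proof in the paper, and indeed no proof is known. Your submission, to its credit, is honest about this --- it is a research programme rather than a proof, and you yourself identify the fatal gap. But to be explicit about why it cannot stand as a proof: every load-bearing step is conjectural. Step one (extracting a non-trivial proper definable subgroup of $(K,+)$, or equivalently a definable $V$-topology, from NIP alone) is precisely the content of Conjecture \ref{ShelahWeak} in the paper, which is open even for strongly dependent fields of finite dp-rank greater than one; the paper notes that Johnson's construction of the topology ``exploit[s] heavily the dp-minimality of the field'' and that Sinclair's attempted generalisation ``leaves major open problems.'' Step three (upgrading the $V$-topology to a \emph{henselian} valuation) is the other half of the conjecture and is likewise open; the paper points out that even the statement that every definable valuation on a dependent field is henselian is only known to \emph{follow from} Conjecture \ref{ShelahNIP} (via \cite[Proposition 5.4]{NIPthen}), so invoking it would be circular. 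Your closing suggestion --- first prove Conjecture \ref{Shelah} and then ``bootstrap'' to the NIP case via strongly dependent reducts or definable subfields --- has no known mechanism behind it: there is no result transferring henselian valuations from strongly dependent reducts to an ambient NIP field, and the canonical example of separably closed non-algebraically-closed fields shows that NIP fields need not be strongly dependent at all.

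One smaller technical point: in your second step you assert that Ostrowski-type arguments rule out the Archimedean case because $K$ is ``neither real closed nor algebraically closed,'' but the relevant hypothesis for Conjecture \ref{ShelahNIP} is \emph{separably} closed, and more importantly the locally compact Archimedean case is excluded only after one knows the field is elementarily equivalent to $\mathbb{R}$ or $\mathbb{C}$ --- which is essentially the real-closed/algebraically-closed dichotomy one is trying to isolate, so this needs care rather than a one-line dismissal. In summary: what you have written is a reasonable survey of why the conjecture is hard, closely mirroring the strategy the paper itself outlines (the $p$-henselian/Koenigsmann $V$-topology route and Johnson's dp-minimal breakthrough), but it proves nothing, and no proof of this statement exists in the paper or in the literature.
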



In the present short note we will explain the statement of these conjectures, and why Conjecture \ref{ShelahNIP} may provide new insight into the stable fields conjecture. Finally, we will show (Theorem \ref{T:main}) that Conjecture \ref{Shelah} is equivalent an algebraic classification of strongly dependent fields and implies that every strongly dependent field is of finite dp-rank.

\section{Background}

Throughout this note (strong) dependence will be used as a black box, and we refer interested readers to \cite{SiBook} for details. Important for our needs will be the dp-rank, denoted by $\textrm{dp-rk}$,  associated with any dependent theory: it is the supremum on all cardinals $\kappa$ such that for any sequence of mutually indiscernible sequences $\langle I_j:j<\kappa\rangle$ of (possibly infinite) tuples (of a saturated model of $T$) there exists a singleton $c$ such that $I_j$ is not indiscernible over $c$ for all $j<\kappa$. In this terminology, $T$ is strongly dependent if $\textrm{dp-rk}(T)\le  \aleph_0$ but one can not find $\aleph_0$ such mutually indiscernible sequences. It is \emph{dp-minimal} if $\textrm{dp-rk}(T)=1$. It follows from \cite[Exercise 2.14]{SiBook} (\cite[Claim 3.14]{Sh783}) that a theory $T$ is (strongly) dependent if and only if the canonical expansion of $T$ by all imaginary sorts, $T^{eq}$, is (strongly) dependent.

Examples of dp-minimal theories include all theories of linear orders, strongly minimal theories, (weakly) o-minimal theories, $p$-adically closed fields (and their finite extensions), algebraically closed (or real closed) valued fields and more. The canonical example of a dependent theory of fields that is not strongly dependent is that of separably closed fields that are not algebraically closed. It is well known, \cite{GurSch}, that all ordered abelian groups are dependent. Among those, the dp-minimal are those groups $G$ such that $[G:pG]<\infty$ for all primes $p$, \cite[Proposition 5.27]{JaSiWa2015}, and the strongly dependent ones are those satisfying that $|\{p: [G:pG]=\infty, p \text{ prime}\}|<\infty$ and $G$ has finite spines \cite[Theorem 4.20]{HaHa} or \cite[Section 6]{farre}. 

A crucial fact in the study of Shelah's conjecture is : 
\begin{fact} \cite[Corollary 3.24]{SiBook} \cite[Observation 3.8]{alfalex}
	Let $\CM$ be a dependent (resp. dp-minimal or strongly dependent) structure. Then $\CM^{sh}$ is dependent (resp. dp-minimal or strongly dependent), where $\CM^{sh}$ is the Shelah expansion of $\CM$, namely the expansion of $\CM$ by all externally definable sets.
\end{fact}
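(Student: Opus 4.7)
The plan is to take as input Shelah's theorem that, when $T = \theo(\CM)$ is NIP, the expansion $\CM^{sh}$ eliminates quantifiers in its natural language $\CL^{sh}$. Granted this, every $\CL^{sh}$-definable set is a boolean combination of externally definable sets, each of the form $X = \{\a \in M^n : \CN \models \varphi(\a, \bar d)\}$ for some $\CN \succ \CM$, $\bar d \in N$ and $\CL$-formula $\varphi$; so any $\CL^{sh}$-formula $\psi(\bar x;\bar y)$ is (up to harmless boolean combinations) of the shape $\varphi(\bar x; \bar y, \bar d)$ for some $\CL$-formula $\varphi$ and some $\bar d$ in a fixed elementary extension $\CN\succ\CM$.

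For the bare dependence statement I would argue by contrapositive: if some $\CL^{sh}$-formula $\psi(x;\bar y)$ has the independence property in $\CM^{sh}$, rewrite it as $\varphi(x;\bar y, \bar z)$ as above; then the IP-witnesses in $M$, after augmenting each $\bar y$-parameter with the fixed $\bar d$, witness IP of $\varphi$ in $\CN$, contradicting NIP of $T$.

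For the quantitative statements (dp-minimality and strong dependence) the natural route is via the ict-pattern characterization of dp-rank: $\dprk(T)\ge \kappa$ iff $T$ admits an ict-pattern of depth $\kappa$ in a single variable. Given such a pattern $\{\psi_\alpha(x;\bar y_\alpha), (\bar b^\alpha_i)_{i<\omega}\}_{\alpha<\kappa}$ in $\CM^{sh}$, rewrite each $\psi_\alpha$ as $\varphi_\alpha(x;\bar y_\alpha, \bar z_\alpha)$ with $\varphi_\alpha\in \CL$ and parameter $\bar d_\alpha$ in a common elementary extension $\CN\succ\CM$. Absorbing each $\bar d_\alpha$ into the corresponding parameter column (replacing $\bar b^\alpha_i$ by $(\bar b^\alpha_i, \bar d_\alpha)$) produces an ict-pattern of depth $\kappa$ for $T$ itself in $\CN$, so $\dprk(T)\ge \kappa$. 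Combined with the trivial $\dprk(\CM) \le \dprk(\CM^{sh})$ coming from $\CL\subseteq \CL^{sh}$, this yields $\dprk(\CM) = \dprk(\CM^{sh})$, from which all three conclusions follow at once.

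The main obstacle — indeed essentially the whole content of the fact — is Shelah's quantifier elimination for $\CM^{sh}$, which rests on a genuine combinatorial input about NIP formulas (in the spirit of the Chernikov--Simon honest definitions). Everything after that is formal bookkeeping, the only care being to keep track of the auxiliary external parameters $\bar d_\alpha$ when pulling formulas back from $\CL^{sh}$ to $\CL$.
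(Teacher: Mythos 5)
Your proposal is correct and is essentially the argument behind the cited sources: the paper itself gives no proof, stating this as a Fact and deferring to \cite[Corollary 3.24]{SiBook} (which derives NIP of $\CM^{sh}$ from Shelah's quantifier elimination for the Shelah expansion, proved via honest definitions) and to the cited observation for the dp-rank statements, which is exactly your pull-back of ict-patterns with the external parameters absorbed into the parameter rows. The only point worth making explicit is the routine compactness step ensuring that finite approximations of a depth-$\kappa$ ict-pattern, realized in $M$ and hence in $\CN$, assemble into a full pattern for $T$; this is standard and does not affect the correctness of your outline.
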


In the above, recall that , if $\CM$ is any structure $S\subseteq M^k$ (some $k$) is \emph{externally definable} if there exists $\CM\prec \CN$ and an $N$-definable (with parameters) $\hat S\subseteq N^k$ such that $S=\hat S\cap M^k$. Note that, for example, if $\CM$ is a linearly ordered structure, then, by compactness, any cut in $M$ is externally definable.  It is well known (and easy to check) \cite[Corollary 8.3]{TZ} that all externally definable sets in a stable structure are, in fact, internally definable. So, for example, no stable field admits an externally definable (non-trivial) valuation. 

The above fact is particularly useful when studying valued fields. We remind that if $K$ is a field, $\Gamma$ an ordered abelian group, then a valuation $v:K^\times\to \Gamma$ is a group homomorphism satisfying $v(x+y)\ge \min\{v(x),v(y)\}$ where $v(0):=\infty$. The set $\CO_v:=\{x:v(x)\ge 0\}$ is a ring (called a the valuation ring) and $\mathfrak M_v:=\{x: v(x)>0\}$ is its unique maximal ideal. The field $Kv:=\CO_v/\mathfrak M_v$ is called the residue field. The value group $\Gamma$ is also denoted by $vK$. Note that $vK\cong K^\times/\CO^\times_v$ (as ordered groups\footnote{The order on $K^\times/\CO_v^\times$ is given by $[x]\ge [y]$ if $xy^{-1}\in \CO_v$.}) implying that giving a valuation $v$ on $K$ is the same as specifying its valuation ring.

A valuation $w:K^\times\to \Gamma'$ is a coarsening of $v$ if $\CO_v\subseteq \CO_w$. There is a natural bijection between valuations coarsening $v$ and convex subgroups of the valuation group, $vK$ (see \cite[Lemma 2.3.1]{EnPr}). So, e.g., given a field $K$ (in some language $\CL$), if a valuation $v$ is definable on $K$ then any valuation on $K$ coarsening $v$ is externally definable. It follows that if $(K,v)$ is a dependent valued field then so is $(K,w)$ whenever $w$ is a coarsening of $v$. 

Finally, we recall that by a theorem of Chevalley (see \cite[Theorem 3.1.1]{EnPr}), if $(K,v)$ is a valued field and $K\subseteq L$ is any field extension, then $v$ can be extended to a valuation on $L$. We remind that $v$ is called \emph{henselian} if it has a unique extension to any algebraic extension. Classical results (starting with \cite{AxKo}) show that under suitable assumptions the theory of a henselian valued field $(K,v)$ is completely determined by the theories of the value group, $vK$, and the residue field, $Kv$. In particular, this is true for strongly dependent valued fields (in a suitable language), see \cite[Corollary 4.5]{HaHaQE}. 

Among henselian valuations on a non-separably closed field, $K$, there exists a canonical one, denoted $v_K$. It is the coarsest henselian valuation $v$ on $K$ with separably closed residue field -- if such a $v$ exists -- and the finest henselian valuation on $K$, otherwise (see, \cite[Section 4.4]{EnPr} for details). It is non-trivial if and only if $K$ is a henselian field (i.e., admits some henselian valuation). It follows that a stable field that is not separably closed is not henselian. Indeed, if $K$ is not separably closed henselian and $Kv_K$ is separably closed, then by \cite[Theorem 3.10]{JahKoeDef} $K$ admits a definable non-trivial valuation, implying that $K$ is unstable (since stable structures can not interpret an infinite linear order). If $Kv_K$ is not separably closed then $v_K$ is externally definable (\cite[Theorem A]{JanNIP}), which is again impossible if $K$ is stable. Moreover,  the above argument shows that a non-separably closed stable field cannot be $t$-henselian, that is, elementarily equivalent to a henselian field, and since any finite extension of a stable field is itself stable, the above shows that no finite extension of a stable field is $t$-henselian. 

In particular, Shelah's Conjecture \ref{ShelahNIP} implies the stable fields conjecture. Observe that a weaker form of Conjecture \ref{ShelahNIP} suffices, namely: 
\begin{conjecture}\label{ShelahWeak}
	Any infinite dependent field $K$ is either real closed, separably closed, or admits a definable non-trivial valuation. 
\end{conjecture}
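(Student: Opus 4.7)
The plan is to assume $K$ is an infinite dependent field that is neither real closed nor separably closed, and produce a \emph{definable} (not merely externally definable) non-trivial valuation on $K$. The natural first split is by whether $K$ is $t$-henselian. If it is, then its canonical henselian valuation $v_K$ is non-trivial, and by the dichotomy recalled in the background section one of two things happens: either the residue field $Kv_K$ is separably closed, in which case \cite[Theorem 3.10]{JahKoeDef} already supplies a definable non-trivial valuation, or $Kv_K$ is not separably closed and \cite[Theorem A]{JanNIP} makes $v_K$ externally definable. In the latter sub-case the task reduces to upgrading external to internal definability, and here I would exploit that $\CO_{v_K}$ is $\aut(K/\emptyset)$-invariant and a subring; one tries to use honest definitions in $K$ (available by NIP) to approximate $\CO_{v_K}$ from inside and outside by definable sets, and then apply a compactness/group-chunk argument on $(K,+)$ to extract a first-order formula carving out either $\CO_{v_K}$ itself or a non-trivial definable coarsening.

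The genuinely open sub-case is when $K$ is not $t$-henselian. My strategy here would be to first produce an $\emptyset$-invariant valuation in a monster model $K^*$, and only then attempt to descend to $K$. To build the invariant valuation I would look for a type-definable, bounded-index, $\aut(K^*)$-invariant subgroup $H$ of $(K^{*\times}, \cdot)$ arising from the connected components of definable groups, and consider a candidate ideal of the form
\[
\mathfrak{M} := \{ x \in K^* : xH \subseteq H \ \text{and} \ x \ \text{is topologically a deformation of } 0 \}
\]
suitably formulated via mutually indiscernible sequences witnessing dependence. Non-triviality of the associated valuation should follow from $K$ being neither real closed nor separably closed, via a Hilbert-90-style obstruction forbidding $(K^\times)^{00} = K^\times$ in the presence of a non-trivial Galois cohomology class. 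Descent from the $\infty$-definable $\CO := \{x : x\mathfrak{M} \subseteq \mathfrak{M}\}$ to a genuinely definable valuation ring would again be handled by NIP honest definitions together with the observation that an $\aut(K^*/K)$-invariant subring containing $K$ must be $K$-definable once it is a finite Boolean combination of formulas of bounded complexity.

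The main obstacle is unquestionably the non-$t$-henselian case: no technique is known that produces an invariant, let alone definable, valuation on an arbitrary non-$t$-henselian dependent field, and in fact Conjecture \ref{ShelahWeak} itself (combined with the classical characterisation of $t$-henselianity) would force the non-existence of such fields. One is thus naturally led either to (i) a direct combinatorial construction of the invariant valuation from NIP-theoretic data such as mutually indiscernible sequences and connected components, which is the approach taken in the dp-minimal setting by Johnson but has resisted generalisation, or (ii) a dp-rank induction bootstrapping from Johnson's classification and peeling off one convex subgroup of the value group at each step, for which the inductive hypothesis would have to be stronger than the conjecture itself. Identifying a workable inductive invariant, or alternatively a canonical invariant valuation functorial in the dependent field, is, in my view, the heart of the matter.
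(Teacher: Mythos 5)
The statement you are trying to prove is Conjecture \ref{ShelahWeak}, and the paper does not prove it: it is an open conjecture. The only thing the paper establishes about it is conditional, namely that it follows from the stronger Conjecture \ref{ShelahNIP} via the equivalence recorded in the Fact quoting \cite[Proposition 5.3]{NIPthen} (henselian implies definably henselian for dependent fields that are neither real closed nor separably closed). So there is no ``paper proof'' to match your argument against, and any unconditional proof attempt must contain a gap somewhere.

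Your proposal does contain such a gap, and to your credit you name it yourself: the non-$t$-henselian case. Everything you write there --- the type-definable bounded-index subgroup $H$ of $K^{*\times}$, the candidate maximal ideal $\mathfrak{M}$, the Hilbert-90-style non-triviality argument --- is a wish list, not an argument; no construction of an invariant valuation on an arbitrary non-$t$-henselian dependent field is known, and producing one is essentially the whole content of the conjecture (the paper's suggested route via Koenigsmann's criterion, i.e.\ showing that failure of the first-order sentence $\Psi$ forces the independence property, is a concrete reformulation of the same open problem). There is also a softer gap in your $t$-henselian case: upgrading the externally definable $v_K$ to an internally definable valuation via ``honest definitions plus a group-chunk argument'' is not a known technique and is unlikely to work as stated, since externally definable subgroups of NIP structures need not be definable; the definability results that are actually available go through Galois-theoretic arguments as in \cite{JahKoeDef} and \cite[Proposition 5.3]{NIPthen}, and they apply to henselian valuations specifically. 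In short: your decomposition into $t$-henselian and non-$t$-henselian cases is reasonable and consistent with how the paper frames the problem, but the proposal is a research plan rather than a proof, and the central step remains open.
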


We note that, by using a result from a subsequent paper (see \cite[Proposition 5.3]{NIPthen}) we get: 
\begin{fact}
	The following statements are equivalent:
	\begin{enumerate}
		\item Every infinite dependent field is either real closed, separably closed or admits a henselian valuation. 
		\item Every infinite dependent field is either real closed, separably closed or admits a \emph{definable} henselian valuation.
	\end{enumerate}
\end{fact}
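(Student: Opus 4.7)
The direction $(2)\Rightarrow(1)$ is immediate, so the plan is to prove $(1)\Rightarrow(2)$. Let $K$ be an infinite dependent field which is neither real closed nor separably closed. By $(1)$, $K$ admits some non-trivial henselian valuation, hence the canonical henselian valuation $v_K$ is itself non-trivial. From here I would follow the same dichotomy used a few paragraphs earlier to rule out $t$-henselianity for non-separably closed stable fields, splitting according to whether the residue field $Kv_K$ is separably closed or not.

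In the case where $Kv_K$ is separably closed, the plan is to apply \cite[Theorem 3.10]{JahKoeDef}, which (as already recorded in the excerpt) yields a non-trivial $\emptyset$-definable valuation $v$ on $K$. To upgrade this to a definable \emph{henselian} valuation I would verify, by inspecting the construction in that theorem, that the valuation produced is a coarsening of $v_K$; since coarsenings of $v_K$ correspond to convex subgroups of $v_K K$ (as recalled in the background), such a $v$ is automatically henselian, closing this case.

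In the case where $Kv_K$ is not separably closed, \cite[Theorem A]{JanNIP} tells us that $v_K$ is $\emptyset$-externally definable on $K$. Here the plan is to invoke the cited \cite[Proposition 5.3]{NIPthen}, which under the dependence hypothesis on $K$ allows one to convert an externally definable henselian valuation into a genuinely definable one. I expect this to be the main obstacle: the content of \cite[Proposition 5.3]{NIPthen} is precisely the non-trivial upgrade from externally definable to definable, presumably exploiting honest definitions in NIP theories together with the $\aut(K)$-invariance of $v_K$, and this is the step I would quote as a black box rather than attempt to reprove. A secondary subtlety worth double-checking is that applying $(1)$ only guarantees some henselian valuation on $K$, not $v_K$ itself; but once one observes that $K$ is henselian, the canonical $v_K$ is non-trivial and the two cases above exhaust all possibilities, so the argument carries through uniformly.
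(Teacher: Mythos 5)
The paper does not actually prove this Fact (it is imported wholesale from \cite[Proposition 5.3]{NIPthen}), but the intended argument is the one written out in detail for the strongly dependent case in Lemma \ref{L:Jahnke}, and your proposal diverges from it at the one point where real work is needed. Your first case ($Kv_K$ separably closed) is fine: the Jahnke--Koenigsmann results already output a non-trivial $\emptyset$-definable \emph{henselian} valuation there, so the extra verification about coarsenings is harmless but unnecessary (your observation that a coarsening of a henselian valuation is henselian is correct).

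The gap is in the case where $Kv_K$ is not separably closed. There you take the externally definable $v_K$ from \cite[Theorem A]{JanNIP} and propose to upgrade it to a definable valuation by citing \cite[Proposition 5.3]{NIPthen}. That is circular: Proposition 5.3 of \cite{NIPthen} \emph{is} the equivalence being proved (the paper derives the Fact from it verbatim), and the intermediate principle you attribute to it --- that an externally definable henselian valuation on a dependent field is automatically definable --- is not a theorem you can lean on. NIP, unlike stability, provides no general mechanism for internalizing externally definable sets (cuts in a real closed field are externally definable but not definable), so ``honest definitions plus invariance'' will not close this case. The actual argument applies hypothesis (1) a \emph{second} time, to the residue field: since $v_K$ is externally definable, $(K,v_K)$ lives in the Shelah expansion of $K$, so $Kv_K$ is interpretable in a dependent structure and hence dependent; it is not henselian (a henselian valuation on $Kv_K$ would compose with $v_K$ to give a strictly finer henselian valuation on $K$, impossible since in this case $v_K$ is the finest one), and it is not separably closed by assumption, so by (1) it is finite or real closed. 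One then quotes the definability results of \cite{JahKoeDef} for canonical henselian valuations with finite, real closed or separably closed residue field to obtain the $\emptyset$-definable henselian valuation. This second application of (1) to $Kv_K$, rather than any external-to-internal transfer, is the idea missing from your proposal.
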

So Conjecture \ref{ShelahWeak} follows from Conjecture \ref{ShelahNIP}. \\

In \cite[Proposition 5.4]{NIPthen} we show that, assuming Conjecture \ref{ShelahNIP}, if $(K,v)$ is a dependent valued field then $v$ is henselian. In particular, Conjecture \ref{ShelahNIP} implies that if $K$ is a dependent field and $v$ is any (externally) definable valuation on $K$ then $v$ is henselian. We do not know whether Conjecture \ref{ShelahNIP} is equivalent to Conjecture \ref{ShelahWeak}. 

It follows from the above discussion that a plausible strategy for proving Conjecture \ref{ShelahNIP} would be to prove separately Conjecture \ref{ShelahWeak} and the conjecture that if $(K,v)$ is dependent then $v$ is henselian (or even the weaker statement that any definable valuation on a dependent field is henselian). 

There are reasons to believe that of the two statements above, Conjecture \ref{ShelahWeak} is the more challenging (if only because it implies the notorious stable fields conjecture). There are, however, several techniques for constructing definable valuations. Let us describe briefly one such approach not assuming 
$t$-henselianity of the field (or any of its finite extensions) which was
also explored in Dupont's PhD thesis (see \cite{DuHaKu}). A valued field is $p$-henselian if its valuation extends uniquely to every finite extension of degree $p$. A field is $p$-henselian if it supports a non-trivial $p$-henselian valuation.

In \cite{Koe95}, Koenigsmann characterizes $p$-henselian fields (of
characteristic different from $p$ and containing a primitive $p$th root of unity) as those fields with
$G:=(K^\times)^p\subsetneq K^\times$ in which the collection $\CN_G$ of sets of the form $a(G+1)\cap b(G+1)$ is a basis for a $V$-topology, namely, a field topology equivalent to the one induced by the open balls of some non-trivial valuation (see \cite[Appendix B]{EnPr} for details).  
There is also a corresponding version for characteristic $p$.
As already mentioned, by \cite{JahKoe}, every $p$-henselian field admits a non-trivial 
$0$-definable henselian valuation.
Since, by standard Galois theoretic arguments, every henselian field (which is neither separably closed nor real closed) has a finite extension satisfying the above assumptions, Conjecture \ref{ShelahNIP} implies that every infinite dependent field that is neither separably closed nor real closed admits a definable valuation.

Note that given a definable subgroup $G$ of $K$, as above, the statement that the collection $\CN_G$ is a basis for a $V$-topology can be expressed by a first order sentence, $\Psi$, (without parameters). 
Thus, proving that an infinite field not satisfying $\Psi$ has the independence property (i.e., is not dependent) seems like a possible approach for proving Conjecture \ref{ShelahWeak} and an important first step in proving Conjecture \ref{ShelahNIP}. 




\section{Strongly dependent fields}
There are good reasons to believe that Conjecture \ref{Shelah} (at least restricted to the finite dp-rank case) may be more accessible than the full conjecture for dependent fields. For example, the stable fields conjecture for fields of finite dp-rank has a surprisingly short proof by showing that such fields are super-stable (of finite $\mathrm{U}$-rank), see \cite[Proposition 7.2]{HalPal}. More significant is Johnson's proof in \cite{johnsonpaper} of Shelah's conjecture for dp-minimal fields. Johnson's proof proceeds, as suggested above: constructing a $V$-topology, and then showing that this topology must come form a definable henselian valuation (that a dp-minimal valued field is henselian was proved independently, using different methods, in \cite{JaSiWa2015}). In his proof Johnson introduces a new approach for constructing a field topology -- exploiting heavily the dp-minimality of the field. In his Phd thesis, Sinclair attempts to generalise Johnson's construction to fields of finite dp-rank but leaves major open problems, see \cite{Peter}.

While Conjecture \ref{Shelah} remains wide open, despite Johnson's breakthrough, we can give a more precise statement of the conjecture. In \cite{JohnDPMin} Johnson proved:
\begin{theorem}[Johnson]\label{classification}\cite[Theorem 9.7.2]{JohnDPMin}
	A field $K$ is dp-minimal if and only if $K$ is perfect and there exists a valuation $v$ on $K$ such that: 
	\begin{enumerate}
		\item $v$ is henselian. 
		\item $v$ is defectless (i.e., any finite valued field extension $(L,v)$ of $(K,v)$ is defectless, i.e., satisfies $[L:K]=[vL:vK][Lv:Kv]$). 
		\item The residue field $Kv$ is either an algebraically closed field of characteristic $p$ or elementarily equivalent to a local field of characteristic $0$. 
		\item The value group $vK$ is almost divisible, i.e., $[vK: n(vK)]<\infty$ for all $n$. 
		\item If $\mathrm{char}(Kv)=p\neq 0$  then $[-v(p),v(p)]\subseteq p(vK)$. 
	\end{enumerate}
\end{theorem}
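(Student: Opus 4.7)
The plan is to split the equivalence into its two directions, which require quite different techniques.

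For the right-to-left direction, the strategy is to show that $(K,v)$ is dp-minimal as a \emph{valued} field; dp-minimality of the pure field $K$ then follows since it is a reduct, and dp-rank can only decrease under reducts. Condition (4) gives dp-minimality of the value group $vK$ via \cite[Proposition 5.27]{JaSiWa2015}, and condition (3) gives dp-minimality of the residue field $Kv$ (trivially in the algebraically closed case; by the well-known analysis of local fields of characteristic zero otherwise). Conditions (1), (2) and (5), together with perfectness, then combine to produce an Ax-Kochen/Ershov style dp-rank transfer principle, in the spirit of \cite[Corollary 4.5]{HaHaQE}, yielding dp-minimality of $(K,v)$.

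For the harder forward direction the plan is: (i) show that $K$ is perfect, using that a dp-minimal field in positive characteristic is Artin-Schreier closed; (ii) construct a non-trivial V-topology on $K$ (for $K$ neither algebraically closed nor real closed), from which the general theory of V-topologies (\cite[Appendix B]{EnPr}) produces a valuation $v$; (iii) deduce that $v$ is henselian from the Jahnke-Simon-Walsberg theorem that any dp-minimal valued field is henselian; (iv) show $v$ is defectless by arguing that a non-trivial defect in a finite extension would produce extra dp-rank; (v) identify $Kv$ by induction on the ``depth'' of the dp-minimal valued field, the residue field being itself dp-minimal but simpler, ultimately reducing to the local field / algebraically closed dichotomy; (vi) derive almost divisibility of $vK$ directly from its dp-minimality as an ordered abelian group, invoking \cite[Proposition 5.27]{JaSiWa2015} again; and (vii) obtain condition (5) in mixed characteristic from Artin-Schreier closedness applied to residue fields of appropriate coarsenings of $v$.

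The main obstacle, by a wide margin, is step (ii): the construction of a V-topology on a dp-minimal field that is neither algebraically closed nor real closed. A neighbourhood basis of $0$ must be simultaneously definable (or at least externally definable, so one can invoke the Shelah expansion), closed under the field operations up to multiplicative translates, and satisfy the boundedness axioms of a V-topology, and bare dp-minimality does not obviously produce such a family. This is precisely the heart of Johnson's breakthrough: he builds the required basis from carefully chosen definable sets, exploiting the fine structure of one-variable definable sets in a dp-minimal field, together with genericity arguments for types of maximal dp-rank. Once the V-topology is available, steps (iii)--(vii) fall within the existing arsenal of henselianity results for NIP valued fields, Ax-Kochen/Ershov transfer, and the structure theory of dp-minimal ordered abelian groups.
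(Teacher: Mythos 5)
This statement is not proved in the paper at all: it is quoted verbatim (up to a reformulation) from Johnson's thesis, \cite[Theorem 9.7.2]{JohnDPMin}, and the only original content the paper supplies is the accompanying remark that Johnson's version assumes $K$ sufficiently saturated and concludes that $Kv$ \emph{is} algebraically closed or a local field, so that one must check the unsaturated version with ``elementarily equivalent to a local field'' follows from the original (via an argument like the one in Fact~\ref{F:p-adically-elem-fiQp}, using model completeness of $\wp$-adically closed fields). Your proposal does not address this reconciliation step, which is the only thing actually required here; instead it attempts to re-derive Johnson's theorem from scratch.

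As a proof of Johnson's theorem your outline is not self-contained: at the acknowledged crux, step (ii), you defer the construction of the V-topology to ``Johnson's breakthrough,'' which makes the argument circular --- you are citing the proof of the theorem as a step in your proof of it. Several of the surrounding steps are also asserted rather than argued: (iv) ``a non-trivial defect would produce extra dp-rank'' is not how defectlessness is obtained (it comes from algebraic maximality of dp-finite henselian fields, which rests on the Artin--Schreier closure results of \cite{KaScWa} and a separate argument for the tame/Kaplansky case); (v) ``induction on the depth'' of the valued field is not a defined induction; and in the right-to-left direction, \cite[Corollary 4.5]{HaHaQE} is an Ax--Kochen/Ershov completeness statement, not a dp-rank transfer, so dp-minimality of $(K,v)$ needs a genuine rank computation (elimination of field quantifiers in a Denef--Pas language after passing to an ac-valued elementary extension, plus an additivity statement for dp-rank as in Sinclair's thesis), which your sketch elides. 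The honest route for this paper is the one it takes: cite Johnson and verify the cosmetic reformulation.
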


\begin{remark}
Johnson's formulation of \cite[Theorem 9.7.2]{JohnDPMin} requires  $K$ to be sufficiently saturated and as a result in (3) above he gets the residue field to be algebraically closed of characteristic $p$ or a local field of characteristic 0. A proof similar to the one below shows that our reformulation of Johnson's result can be obtained from the original theorem. 
\end{remark}

We suggest that with the obvious adaptations, Johnson's theorem characterises all strongly dependent fields: 
\begin{conjecture}\label{NewConj}
	A field $K$ is strongly dependent if and only if it is perfect and there exists a valuation $v$ on $K$ such that 
\begin{enumerate}
		\item $v$ is henselian. 
		\item $v$ is defectless.
		\item The residue field $Kv$ is either an algebraically closed field of characteristic $p$ or elementarily equivalent to a local field of characteristic $0$. 
		\item The valuation group $vK$ is strongly dependent. 
		\item If $\mathrm{char}(Kv)=p\neq 0$  then $[-v(p),v(p)]\subseteq p(vK)$.
\end{enumerate}
\end{conjecture}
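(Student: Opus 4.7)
The plan is to prove the biconditional in two directions, with the reverse direction (that a perfect field carrying a valuation satisfying (1)--(5) is strongly dependent) provable unconditionally from the cited transfer principles, while the forward direction is conditional on Conjecture \ref{Shelah}. In effect, what one can hope to establish at present is the \emph{equivalence} of Conjectures \ref{Shelah} and \ref{NewConj}.

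For the reverse direction, let $(K,v)$ satisfy (1)--(5). I would invoke \cite[Corollary 4.5]{HaHaQE}, which provides an Ax--Kochen--Ershov style transfer principle: in a suitable language, a henselian, defectless valued field whose residue field and value group are strongly dependent (together with condition (5) controlling the mixed characteristic case) is itself strongly dependent. The residue field is strongly dependent, since an algebraically closed field is strongly minimal and local fields of characteristic $0$ (finite extensions of $\Qq_p$ and $\Rr$) are dp-minimal; the value group is strongly dependent by assumption (4); and the remaining hypotheses of the transfer are (1), (2) and (5). Hence $(K,v)$ is strongly dependent and so is the field reduct $K$.

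For the forward direction, let $K$ be infinite and strongly dependent. By Conjecture \ref{Shelah}, $K$ is algebraically closed, real closed, or henselian. In the first two cases we take $v$ trivial: $Kv=K$ is then ACF or elementarily equivalent to $\Rr$, which is a local field of characteristic $0$. Otherwise, consider the canonical henselian valuation $v_K$, which is either definable or externally definable by \cite[Theorem A]{JanNIP}; by the Shelah expansion fact, $(K,v_K)$ is strongly dependent. Condition (1) is built in, and (4) follows because $v_K K$ is interpretable in $(K,v_K)$. For (3), $Kv_K$ is strongly dependent; by the defining property of $v_K$ it is either separably closed (hence algebraically closed, since non-algebraically closed separably closed fields fail to be strongly dependent) or admits no non-trivial henselian valuation, in which case a second application of Conjecture \ref{Shelah} to $Kv_K$ forces it to be ACF or RCF. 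Perfectness of $K$ can be addressed using known results on the positive characteristic part of dependent fields.

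The principal obstacles are conditions (2) and (5): defectlessness and the arithmetic control $[-v(p),v(p)]\subseteq p(vK)$ in mixed characteristic. These are not purely model-theoretic statements, and in the dp-minimal setting Johnson extracted them in \cite{JohnDPMin} only through a substantial structural analysis of the field; Sinclair's attempt \cite{Peter} to adapt these arguments to finite dp-rank leaves important pieces open. A natural strategy is to exploit the fact, recalled in the background section, that the strongly dependent ordered abelian group $vK$ has finitely many spines \cite{HaHa, farre}, which yields a finite chain of convex subgroups and hence a finite chain of coarsenings of $v_K$. One would then try to peel off these coarsenings one at a time and reduce by induction on dp-rank to Johnson's dp-minimal theorem, propagating (2) and (5) along the chain. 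Executing this induction -- and in particular controlling defectlessness through the coarsening steps -- is where I expect the main difficulty to lie.
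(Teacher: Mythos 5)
Your overall architecture (reverse direction unconditional via the transfer principles of \cite{HaHaQE}, forward direction via the canonical henselian valuation $v_K$ and a second application of Conjecture \ref{Shelah} to the residue field) matches the paper's, but the forward direction as you present it has two genuine gaps. First, your case analysis for condition (3) is incomplete: Conjecture \ref{Shelah} only speaks about \emph{infinite} fields, so when $Kv_K$ is neither separably closed nor henselian you can only conclude it is algebraically closed, real closed, \emph{or finite}. The finite case is not a degenerate one --- it is exactly what happens for $\Qq_p$ and its finite extensions --- and a finite residue field does not satisfy (3). The paper handles this by showing that $[0,v(p)]$ is finite (so $(K,v)$ is $p$-valued), passing to the coarsening $w$ corresponding to the convex subgroup generated by $v(p)$, and proving that the new residue field is $\wp$-adically closed, hence elementarily equivalent to a finite extension of $\Qq_p$ (Fact \ref{F:p-adically-elem-fiQp}); your proposal never produces the ``local field of characteristic $0$'' alternative in (3).

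Second, you explicitly leave conditions (2) and (5) open and propose an induction on the spines of $vK$ to recover them from Johnson's dp-minimal theorem. That machinery is not needed and is not how the paper proceeds: defectlessness of any henselian valuation on a strongly dependent field is already available as a citation (\cite[Theorem 4.3.2]{JohnDPMin}), and condition (5) in the infinite-residue-field case is \cite[Lemma 2.7]{HaHaQE}; in the finite-residue-field case (5) becomes vacuous after the coarsening above, since the new residue field has characteristic $0$. So the ``principal obstacles'' you identify are in fact already resolved in the literature, while the step you treat as routine (getting (3) right in mixed characteristic) is where the actual work of the proposition lies. Your reverse direction is essentially correct and agrees with the paper, modulo the fact that the paper routes it through elimination of field quantifiers in the Denef--Pas language (after passing to an $\aleph_1$-saturated ac-valued extension) together with Fact \ref{F:transfer-strongly}, rather than quoting \cite[Corollary 4.5]{HaHaQE} directly.
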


In the present note we show
\begin{theorem}\label{T:main}
	Conjecture \ref{Shelah} is equivalent to Conjecture \ref{NewConj}. 
\end{theorem}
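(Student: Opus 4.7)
The theorem is proved as two implications.

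For Conjecture \ref{NewConj} $\Rightarrow$ Conjecture \ref{Shelah}: given a strongly dependent $K$ which is neither algebraically closed nor real closed, Conjecture \ref{NewConj} supplies a valuation $v$ satisfying (1)--(5). If $v$ is non-trivial, (1) provides the required henselian valuation. If $v$ is trivial, then $Kv=K$ and (3) forces $K$ to be algebraically closed of positive characteristic (excluded) or elementarily equivalent to a local field of characteristic zero; after excluding $\Rr$ and $\Cc$, $K$ is elementarily equivalent to a finite extension of $\Qq_p$, which is henselian, so $K$ admits a non-trivial henselian valuation.

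For Conjecture \ref{Shelah} $\Rightarrow$ Conjecture \ref{NewConj}, I prove both directions of the biconditional in Conjecture \ref{NewConj}. The $(\Leftarrow)$ direction does not invoke Conjecture \ref{Shelah}: fields satisfying (1)--(5) are strongly dependent by the Ax-Kochen/Ershov transfer for strongly dependent henselian valued fields \cite[Corollary 4.5]{HaHaQE}, since the residue field in (3) is dp-minimal, the value group is strongly dependent by (4), and (1), (2), (5) secure the structural hypotheses of the transfer. For the $(\Rightarrow)$ direction, let $K$ be strongly dependent; it is automatically perfect. If $K$ is algebraically closed, or elementarily equivalent to a local field of characteristic zero (covering real closed $K$ via $\Rr$), take $v$ trivial. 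Otherwise $K$ is neither algebraically closed nor real closed, so by Conjecture \ref{Shelah} it is henselian; I take $v$ to be $v_K$, or---should $Kv_K$ fail (3), which can occur when $Kv_K$ is finite, as for $K\equiv\Qq_p((t))$---a suitable henselian coarsening of $v_K$ whose residue field does satisfy (3); the existence of such a coarsening follows by iteratively applying Conjecture \ref{Shelah} to the strongly dependent residue fields arising, using the analysis that if the residue field is not separably closed then $v_K$ is the finest henselian valuation and hence its residue field carries no non-trivial henselian valuation (else compose). Property (1) holds by construction, (4) follows since the value group of $v$ is interpretable in the strongly dependent Shelah expansion $K^{sh}$ using external definability of $v_K$ from \cite[Theorem A]{JanNIP}, and (3) holds by the construction of $v$.

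\textbf{The main obstacle} will be verifying conditions (2) (defectlessness) and (5) ($[-v(p),v(p)]\subseteq p(vK)$ when $\ch Kv=p\neq 0$). Both are automatic in equal characteristic zero; in positive or mixed residue characteristic they require a careful structural analysis of $(K,v)$ combining strong dependence of $K$ with the information already obtained on $Kv$ and $vK$. I expect that adapting Johnson's dp-minimal techniques from \cite{JohnDPMin} and invoking the classification of strongly dependent ordered abelian groups from \cite[Theorem 4.20]{HaHa} will supply these, but carrying out the adaptation uniformly across all residue characteristics is the essential technical step of the proof.
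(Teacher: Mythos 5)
Your skeleton matches the paper's: the implication from Conjecture \ref{NewConj} to Conjecture \ref{Shelah}, the ``if'' direction of Conjecture \ref{NewConj} via the quantifier-elimination/Ax--Kochen--Ershov transfer of \cite{HaHaQE}, and, for the ``only if'' direction, the iteration of Conjecture \ref{Shelah} through the canonical henselian valuation $v_K$ (this is Lemma \ref{L:Jahnke}), producing a henselian $v$ with $Kv$ finite, real closed or algebraically closed. The genuine gap sits exactly where you place it, in conditions (2) and (5) --- but the resolution is not a fresh adaptation of Johnson's dp-minimal machinery across all residue characteristics. It is two citations plus one structural trick. Defectlessness of $(K,v)$ is quoted directly from \cite[Theorem 4.3.2]{JohnDPMin}, and strong dependence of the valued field $(K,v)$ --- hence of $vK$, which is condition (4) --- is the known fact that a henselian valuation on a strongly dependent field yields a strongly dependent valued field (\cite[Theorem 5.14]{HaHa}); no detour through external definability of $v_K$ and the Shelah expansion is needed, and your route for (4) would in any case have to worry about whether the coarsening you end up using is externally definable.

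For (5): when $Kv$ is infinite it is \cite[Lemma 2.7]{HaHaQE}. When $Kv$ is finite the paper does not verify (5) for $v$ at all. Instead, since $[0,v(p)]$ is finite (\cite[Proposition 5.13]{HaHa}), $(K,v)$ is a $p$-valued field; letting $\Delta$ be the convex subgroup of $vK$ generated by $v(p)$, the coarsening $w$ at $\Delta$ has residue field which is $\wp$-adically closed by Prestel--Roquette and hence elementarily equivalent to a finite extension of $\Qq_p$ (Fact \ref{F:p-adically-elem-fiQp}). So (3) holds for $w$, and (5) is vacuous because $\mathrm{char}(Kw)=0$. You gesture at this change of valuation to rescue (3), but you do not notice that it simultaneously disposes of (5); instead you leave (2) and (5) as an open technical programme. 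As written, your proposal therefore does not constitute a proof of the ``only if'' direction: the ``essential technical step'' you defer is precisely what must be supplied, and the paper supplies it by citation and by the coarsening trick rather than by new analysis.
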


It is clear that Conjecture \ref{NewConj} implies Conjecture \ref{Shelah}. Indeed, let $v$ be the valuation provided by Conjecture \ref{NewConj}. If $v$ is trivial then, by (3), $K$ is algebraically closed, real closed or elementarily equivalent to a finite extension of $\Qq_p$. If $v$ is non-trivial Shelah's conjecture follows from (1). So we focus on proving the other direction. 

An \emph{angular component map} on a valued field $(K,v)$ is a multiplicative group homomorphism $ac:K^\times \to Kv^\times$ such that $ac(a)=res(a)$ whenever $v(a)=0$, we extend it to $ac:K\to Kv$ by setting $ac(0)=0$. Every valued field $(K,v)$ has an elementary extension with an angular component map on it, see \cite[Corollary 5.18]{vd}. An \emph{ac-valued field} is a valued field equipped with an angular component map. The $3$-sorted language of valued fields augmented  by a function symbol \emph{ac} for the angular component map is called the Denef-Pas language. We will repeatedly use the following:
\begin{fact}\label{F:transfer-strongly}
Let $(K,v)$ be an ac-valued field eliminating field quantifiers. If $Kv$ and $vK$ are strongly dependent then so is $(K,v,ac)$.
\end{fact}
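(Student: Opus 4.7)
The plan is to prove this via an Ax--Kochen--Ershov style transfer of strong dependence, exploiting field quantifier elimination in the Denef--Pas language to reduce mutual-indiscernibility configurations in $(K,v,ac)$ to ones in the value group or residue field sort. I argue by contradiction: suppose $(K,v,ac)$ is not strongly dependent, so there exist $\aleph_0$ mutually indiscernible sequences $\langle I_j : j<\omega\rangle$ (over some base $A$) and a singleton $c$ such that each $I_j$ fails to be indiscernible over $c$. Using that strong dependence passes to $T^{eq}$ and respects sorts, I may assume $c$ lies in a single sort and likewise that each element of each $I_j$ lies in a single fixed sort.

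If $c$ and the $I_j$ both lie in $vK$ (respectively $Kv$), field quantifier elimination gives that $vK$ (resp.\ $Kv$) is a stably embedded pure reduct --- an ordered abelian group (resp.\ field) --- so the witnessing configuration descends to $vK$ (resp.\ $Kv$) itself, contradicting strong dependence of that sort. Mixed-sort cases (e.g.\ $I_j$ in the field sort but $c$ in $vK$) are reduced to the pure cases by replacing each $\bar a\in I_j$ with the derived tuple $(v(\bar t(\bar a)),ac(\bar s(\bar a)))$ for a fixed polynomial tuple witnessing the breaking; mutual indiscernibility is preserved under such $\emptyset$-definable maps.

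The main case is that $c$ and all elements of the $I_j$ lie in the field sort. By field quantifier elimination, each formula $\varphi_j(\bar x;c)$ witnessing non-indiscernibility of $I_j$ over $c$ is equivalent modulo $T = \theo(K,v,ac)$ to a Boolean combination of value-group formulas $\psi_j\bigl(v(\bar t_j(\bar x,c))\bigr)$ and residue-field formulas $\chi_j\bigl(ac(\bar s_j(\bar x,c))\bigr)$ for polynomials $\bar t_j,\bar s_j \in \mathbb{Z}[\bar X,Y]$. Define $J_j = \bigl(v(\bar t_j(\bar a,c))\bigr)_{\bar a\in I_j} \subseteq vK$ and $L_j = \bigl(ac(\bar s_j(\bar a,c))\bigr)_{\bar a\in I_j} \subseteq Kv$. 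Both families are mutually indiscernible over the parameter set $A \cup \{v(r(c)),ac(r(c)): r\in\mathbb{Z}[Y]\}$, and by construction the failure of indiscernibility of $I_j$ over $c$ transfers to failure of indiscernibility of $J_j$ or $L_j$ over some entry of that derived parameter set.

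The principal obstacle is bookkeeping: the polynomials $\bar t_j,\bar s_j$ (and hence the location of the break inside $vK$ vs.\ $Kv$) depend on $j$. Since there are only countably many polynomials over $\mathbb{Z}$ and only two sorts in which the break can occur, a pigeonhole argument refines $\langle I_j\rangle$ to an infinite subfamily on which $\bar t_j, \bar s_j$ and the target sort are constant, yielding a single derived singleton that breaks the indiscernibility of infinitely many $J_j$ (or $L_j$). This produces an $\aleph_0$-configuration contradicting strong dependence of $vK$ (or of $Kv$), completing the proof.
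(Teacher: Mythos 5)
The paper does not actually prove this fact from scratch: it simply cites \cite[Claim 1.17(2)]{Sh863} together with \cite[Theorem 5]{DelHenselian}. Your attempt to supply a real argument is therefore going beyond what is written, but it has a genuine gap in its main case. When $c$ and the rows $I_j$ all lie in the field sort, you replace each $I_j$ by the derived sequences $J_j=\bigl(v(\bar t_j(\bar a,c))\bigr)_{\bar a\in I_j}$ and $L_j=\bigl(ac(\bar s_j(\bar a,c))\bigr)_{\bar a\in I_j}$ and assert that these are mutually indiscernible over $A\cup\{v(r(c)),ac(r(c)):r\in\mathbb{Z}[Y]\}$. This is false: the maps $\bar a\mapsto v(\bar t_j(\bar a,c))$ are only $c$-definable, and since by hypothesis $c$ destroys the indiscernibility of \emph{every} $I_j$, their images need not be indiscernible (let alone mutually so) over any set not containing $c$. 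Adding the valuations and angular components of polynomials in $c$ alone does not repair this, because $v(t(\bar a,c))$ is not a function of value-group and residue data of $\bar a$ and of $c$ taken separately --- already $v(a-c)$ cannot be recovered from $v(a)$ and $v(c)$. So the configurations you produce in $vK$ and $Kv$ are not legitimate witnesses against strong dependence of those sorts, and the contradiction evaporates.

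The standard way around this --- and, in essence, what Shelah's Claim 1.17(2) and \cite[Lemma 7.12]{Chernikov} (used later in this paper) do --- is to invert the roles of the singleton and the array: one decomposes formulas $\varphi(x;\bar y)$ in which $x$ is the \emph{single} field variable destined to play the role of $c$ and $\bar y$ ranges over the rows. Field quantifier elimination together with the factorization of one-variable polynomials reduces such a formula to a Boolean combination of conditions on $v(x-\alpha)$ and $ac(x-\alpha)$ for finitely many $\alpha$ algebraic over the row; the derived sequences of such $\alpha$'s are definable from the rows alone, without reference to $c$, so mutual indiscernibility survives the push-down into the value group and residue field. Note that the later Proposition in the paper uses precisely formulas of the shape $\xi_i^j(v(x-c_i^j),d_i^j)$ and $\rho_i^j(ac(x-c_i^j),e_i^j)$, with $x$ the singleton and the $c_i^j$ coming from the array --- the opposite of your set-up. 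If you want a self-contained proof you should restructure the argument along these lines; as it stands the reduction in your main case does not go through.
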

\begin{proof}
This is basically \cite[Claim 1.17(2)]{Sh863} combined with 
\cite[Theorem 5]{DelHenselian}.
\end{proof}

That any field satisfying conditions (1)-(5) of Conjecture \ref{NewConj} is strongly dependent is, esssentially, the content of the main results of \cite{HaHaQE}. Indeed, if $v$ is trivial, then by (3), $K$ is strongly dependent (in fact, dp-minimal). Otherwise, pass to an $\aleph_1$-saturated elementary extension of $(K,v)$, so we may assume that $(K,v)$ is ac-valued. Conditions (1)--(5) are elementary and thus still hold. Any field satisfying conditions (1) and (2) is algebraically maximal. Condition (3) implies that the residue field is strongly dependent, and therefore -- if it is of characteristic $p>0$ -- has no finite extensions of degree $p$ by \cite[Corollary 4.4]{KaScWa}. Condition (5) implies, with all of what we have already said, that $(K,v,ac)$ eliminates field quantifiers by \cite[Theorem 1]{HaHaQE} (in the notation there, $(K,v)\models T_1$). Consequently, by Fact \ref{F:transfer-strongly}, $(K,v,ac)$, and thus $(K,v)$, is strongly dependent. Hence $K$ is strongly dependent in the language of rings.

The following observation, with the obvious adaptations is true for any dependent field (assuming Conjecture \ref{ShelahNIP}), see \cite[Proposition 5.3]{NIPthen}. For completeness we give the proof for strongly dependent fields.

\begin{lemma}\label{L:Jahnke}
	Assume Conjecture \ref{Shelah}.  Then any infinite strongly dependent $K$ admits a henselian valuation $v$ (possibly trivial) such that $Kv$ is real closed, algebraically closed or finite. In particular, any infinite 
strongly dependent field that is neither real closed nor algebraically closed 
admits a ($\emptyset$-)definable henselian valuation 
(in the language of rings).
\end{lemma}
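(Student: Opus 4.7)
The plan is to produce the required valuation as the canonical henselian valuation $v_K$. If $K$ is algebraically closed or real closed, I would take $v$ to be trivial, whose residue field is $K$ itself. Otherwise, since the only strongly dependent separably closed fields are algebraically closed (as noted in the background section), $K$ is not separably closed, so $v_K$ is defined; and by Conjecture \ref{Shelah}, $K$ carries some non-trivial henselian valuation, whence $v_K$ is non-trivial.

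Next I would analyse the residue field $Kv_K$, which, being interpretable in $K$, is itself strongly dependent. If the first clause in the definition of $v_K$ applies, then $Kv_K$ is separably closed, and being strongly dependent it is in fact algebraically closed, as required. Otherwise $Kv_K$ is not separably closed, $v_K$ is by definition the finest henselian valuation on $K$, and I claim $Kv_K$ is then either real closed or finite. For if $Kv_K$ were infinite and not real closed, then being also not separably closed it would not be algebraically closed either, and Conjecture \ref{Shelah} applied to $Kv_K$ would supply a non-trivial henselian valuation $w$ on $Kv_K$. The composite $w \circ v_K$, whose valuation ring is the preimage of $\CO_w$ under the $v_K$-residue map, would then be a henselian valuation on $K$ strictly refining $v_K$, contradicting the maximality of $v_K$.

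For the ``in particular'' statement, assume $K$ is neither real closed nor algebraically closed, so the $v_K$ produced above is non-trivial with residue field algebraically closed, real closed, or finite. In the algebraically closed residue case, the field $K$ is $p$-henselian for some prime $p$ (after possibly adjoining a primitive $p$-th root of unity and descending, as is standard), and the Jahnke--Koenigsmann theorem \cite{JahKoe} then provides a $\emptyset$-definable non-trivial henselian valuation on $K$. In the other two cases, $Kv_K$ is not separably closed, and Jahnke's theorem \cite{JanNIP} for NIP fields yields that $v_K$ itself is $\emptyset$-definable in the language of rings.

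The main obstacle, in my view, is the $\emptyset$-definability step: the structural part is a clean recursive use of Conjecture \ref{Shelah} on the (interpretable, hence strongly dependent) residue field, but translating the abstract existence of a henselian valuation into a ring-theoretic definition without parameters requires the more delicate definability results of Jahnke--Koenigsmann and Jahnke, with particular care needed for the root-of-unity reduction in positive residue characteristic.
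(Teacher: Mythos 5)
Your structural argument coincides with the paper's: reduce to the case where $K$ is neither real closed nor algebraically closed, pass to the canonical henselian valuation $v_K$, dispose of the case where $Kv_K$ is separably closed using the fact that strongly dependent separably closed fields are algebraically closed, and otherwise note that $Kv_K$ cannot carry a non-trivial henselian valuation (composing places would strictly refine $v_K$), so that Conjecture \ref{Shelah} applied to $Kv_K$ forces it to be finite or real closed. However, one step in this part is not justified as written: you claim $Kv_K$ is strongly dependent ``being interpretable in $K$''. At that stage you do not know that $v_K$ is definable in the language of rings --- that is essentially what the ``in particular'' clause asks you to establish --- so the residue field is not known to be interpretable in the pure field. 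The paper supplies this input from \cite{HaHa}: Theorem 2 there gives that $(K,v_K)$ is strongly dependent as a valued field whenever $K$ is, and Proposition 5.2 handles the henselian valuations with separably closed residue field. One could alternatively route through external definability of $v_K$ and strong dependence of the Shelah expansion, but some such result is genuinely needed; plain interpretability is not available.

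The second gap is in the ``in particular'' clause. For the real closed and finite residue field cases you invoke \cite{JanNIP} to conclude that $v_K$ is $\emptyset$-definable in the language of rings. As recalled in the background section, \cite[Theorem A]{JanNIP} only yields that $v_K$ is \emph{externally} definable when $Kv_K$ is not separably closed, i.e.\ definable with parameters from an elementary extension; this is strictly weaker than $\emptyset$-definability in the ring language and does not give the statement of the lemma. The correct source is \cite{JahKoeDef}, whose Proposition 3.1, Theorem 3.10 and Corollary 3.11 cover the finite, real closed and algebraically closed residue field cases respectively, and this is what the paper cites for all three. Your $p$-henselian sketch for the algebraically closed case is in the spirit of those proofs (and of the discussion of \cite{JahKoe} in the background section), but it would still need the argument that the resulting $\emptyset$-definable $p$-henselian valuation is actually henselian, which is precisely the content of the relevant result in \cite{JahKoeDef}.
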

\begin{proof}
Let $K$ be an infinite strongly dependent field. We first show the existence
of a henselian valuation $v$ on $K$ with $Kv$ finite, real closed or algebraically closed.
We may assume that $K$ is neither algebraically closed nor real closed (otherwise take $v$ to be the trivial valuation). If $K$ admits a henselian valuation, $v$, with $Kv$  separably closed, then by \cite[Proposition 5.2]{HaHa} 
$Kv$ is strongly dependent and thus by \cite[Proof of Claim 5.40]{Sh863} already
algebraically closed. So we may assume that this is not the case, i.e. that every non-trivial henselian valuation on $K$ has non separably closed residue field.
	By \cite[Theorem 4.4.2]{EnPr}, there exists a finest henselian valuation with non separably closed residue field, the canonical valuation $v_K$.  Notice that $Kv_K$ is non henselian. Indeed, if it were henselian then composing the corresponding place with $K\to Kv_K$ would yield a henselian valuation on $K$ strictly finer than $v_K$, contradiction.
	 Since $(K,v_K)$ is strongly dependent (\cite[Theorem 2]{HaHa}) so is $Kv_{K}$, and applying Conjecture \ref{Shelah} again, we get that it is finite or real closed. 

The second statement follows since any field $K$ that admits a
non-trivial henselian valuation with residue field $Kv$ finite, real closed or
algebraically closed supports a nontrivial $\emptyset$-definable 
henselian valuation 
(see \cite[Proposition 3.1, Theorem 3.10 and Corollary 3.11 respectively]{JahKoeDef}).
\end{proof}

Recall that a \emph{$\wp$-adically closed field} is a $p$-valued field of $p$-rank d (in the sense of \cite{PR84}) which does not admit any proper algebraic extension of the same $p$-rank. This is a first order property by \cite[Theorem 3.1]{PR84}. The following is folklore.
	\begin{fact}\label{F:p-adically-elem-fiQp}
	Every $\wp$-adically closed field is elementary equivalent to a finite extension of a $p$-adic field,  $\mathbb{Q}_p$.
	\end{fact}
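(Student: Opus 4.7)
The plan is to combine two ingredients from Prestel--Roquette's theory of formally $p$-adic fields. The first is the analogue of Tarski's theorem for $\wp$-adically closed fields: for each fixed positive integer $d$, the theory of $\wp$-adically closed fields of $p$-rank $d$ is complete (this is the main completeness result of \cite{PR84}, proved via an embedding lemma in the style of the real closed case). In particular, any two $\wp$-adically closed fields of the same $p$-rank are elementarily equivalent. The second ingredient is that every finite extension of $\mathbb{Q}_p$ is itself $\wp$-adically closed, with $p$-rank equal to its degree.

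Given these ingredients, the proof of the fact is immediate: take any finite extension $L/\mathbb{Q}_p$ of degree $d$ (for concreteness, the unique unramified extension of degree $d$); by the second ingredient, $L$ is $\wp$-adically closed of $p$-rank $d$, so by the first ingredient, every $\wp$-adically closed field of $p$-rank $d$ is elementarily equivalent to $L$.

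The only nontrivial step, and the one I would spell out, is the verification of the second ingredient. Let $L$ be a finite extension of $\mathbb{Q}_p$ of degree $n$. Since $\mathbb{Q}_p$ is henselian, the $p$-adic valuation extends uniquely to a valuation $v_L$ on $L$, and $(L,v_L)$ is henselian. The residue field $Lv_L$ is a degree-$f$ extension of $\mathbb{F}_p$, and the value group satisfies $[v_L L : \mathbb{Z}\,v_L(p)] = e$, with $ef = n$; hence $(L,v_L)$ is a $p$-valued field (in the sense of \cite{PR84}) of $p$-rank $n$. To see that it admits no proper algebraic extension of $p$-rank $n$, note that for any finite extension $L'/L$ inside an algebraic closure one has $[L':\mathbb{Q}_p] = [L':L]\cdot n$, and the unique extension of $v_L$ to $L'$ has $p$-rank $[L':\mathbb{Q}_p]$; so preservation of $p$-rank forces $L' = L$. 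Thus $L$ is $\wp$-adically closed of $p$-rank $n$. I do not anticipate a real obstacle here; the only point requiring care is tracking the precise convention for ``$p$-rank'' in \cite{PR84} and confirming that the completeness theorem is stated there in the form used above.
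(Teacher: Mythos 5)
Your second ingredient (finite extensions of $\mathbb{Q}_p$ are $\wp$-adically closed of $p$-rank equal to their degree) is fine, but your first ingredient is false, and this is a fatal gap. The theory of $\wp$-adically closed fields of a fixed $p$-rank $d$ is \emph{not} complete for $d\geq 2$: the $p$-rank is $d=ef$ (initial ramification index times residue degree), and different factorizations give non-elementarily-equivalent fields. Concretely, the unramified quadratic extension of $\mathbb{Q}_p$ and the ramified extension $\mathbb{Q}_p(\sqrt{p})$ both have $p$-rank $2$, yet the first contains $p^2-1$ roots of $x^{p^2-1}-1$ while the second contains only $p-1$ of them (by Hensel's lemma the number of roots of unity detects the residue field), so they are distinguished by a first-order sentence in the language of rings. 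In fact distinct finite extensions of $\mathbb{Q}_p$ are in general not elementarily equivalent even when they have the same $e$ and $f$. What \cite{PR84} actually provides at this point is \emph{model} completeness of the theory of $\wp$-adically closed fields of $p$-rank $d$ (their Theorem 5.1), not completeness; your argument would prove the false statement that every $\wp$-adically closed field of $p$-rank $d$ is elementarily equivalent to the unramified degree-$d$ extension.

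The repair is essentially the paper's proof: you must produce, inside the given field $K$, a finite extension of $\mathbb{Q}_p$ that is an \emph{elementary substructure}, rather than compare $K$ to an externally chosen model. Pass to a sufficiently saturated elementary extension so that $K$ contains a copy of $\mathbb{Q}_p$, and set $L=K\cap\mathbb{Q}_p^{\mathrm{alg}}$. By \cite[Theorem 3.4]{PR84} $L$ is again $\wp$-adically closed of the same $p$-rank; it is a finite extension of $\mathbb{Q}_p$ because its ramification index and inertia degree over $\mathbb{Q}_p$ are bounded by the $p$-rank; and model completeness applied to the inclusion $L\subseteq K$ yields $L\preccurlyeq K$, hence $K\equiv L$. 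This is where the dependence of the elementary class on more than just the $p$-rank is absorbed: the particular finite extension one lands on varies with $K$.
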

	\begin{proof}
	Let $(K,v)$ be a $\wp$-adically closed field, which we may assume to be sufficiently saturated and thus containing an isomorphic copy of $\mathbb{Q}_p$. Let $L$ be the intersection of $K$ and $\mathbb{Q}_p^{alg}$ (taken inside $K^{alg}$). It is also $\wp$-adically closed of the same $p$-rank as $K$ by \cite[Theorem 3.4]{PR84}. Thus $L$ is elementary equivalent to $K$ by model completeness \cite[Theorem 5.1]{PR84}. On the other hand, $L$ is a finite extension of $\mathbb{Q}_p$  since it has finite ramification index and finite inertia degree, see \cite[page 15]{PR84}.
	\end{proof}

We can now prove the left to right direction of our main result: 

\begin{proposition}\label{P:shelah-implies-new}
	Assume Conjecture \ref{Shelah}, then Conjecture \ref{NewConj} holds. 
\end{proposition}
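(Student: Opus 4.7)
My plan is to produce, for each infinite strongly dependent field $K$, a valuation $v$ witnessing Conjecture \ref{NewConj}; the perfectness of $K$ is already known from strong dependence and the absence of Artin-Schreier extensions, \cite[Corollary 4.4]{KaScWa}. If $K$ is algebraically closed or real closed, I take $v$ trivial; conditions (1)--(5) are immediate, with (3) satisfied since RCF is elementarily equivalent to the archimedean local field $\mathbb{R}$ of characteristic $0$.

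Otherwise, I invoke Conjecture \ref{Shelah} together with Lemma \ref{L:Jahnke} to produce a henselian valuation $v$ on $K$ whose residue field is algebraically closed, real closed, or finite. In the first two cases I take this $v$ directly: condition (1) is built in, (3) is immediate (the real closed subcase again via $\mathbb{R}$), and (4) follows because $(K,v)$ is strongly dependent by \cite[Theorem 2]{HaHa}, whence so is $vK$. The delicate points here are (2) (defectlessness) and (5) (the ramification inclusion), both of which reflect the absence of wild ramification in a strongly dependent henselian valued field. I expect to derive them from Kaplan-Scanlon-Wagner together with a Hensel-lemma lifting argument that shows any failure would produce a forbidden degree-$p$ extension of $K$ in the pertinent residue characteristic.

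In the remaining case, $Kv$ is finite of characteristic $p$, so by \cite[Corollary 4.4]{KaScWa} one has $\mathrm{char}(K)=0$ (else the Artin-Schreier extensions of $Kv$ would lift by henselianity to $K$). Let $H\subseteq vK$ be the convex hull of $v(p)$ and let $w$ be the coarsening of $v$ with value group $vK/H$. Then $w$ is henselian, $w(p)=0$, and $Kw$ is a strongly dependent henselian valued field of characteristic $0$ whose induced valuation has value group $H$ and residue field $Kv$. Since $H$ is archimedean by construction and a convex (hence strongly dependent) subgroup of $vK$, I aim to show $H\equiv\mathbb{Z}$; granting this, $Kw$ is $\wp$-adically closed, and Fact \ref{F:p-adically-elem-fiQp} gives that $Kw$ is elementarily equivalent to a finite extension of $\mathbb{Q}_p$, verifying (3) for $w$. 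Conditions (1) and (4) for $w$ are then immediate, while (2) and (5) are vacuous as $w(p)=0$.

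The main obstacle is ruling out the densely ordered alternative for $H$: strong dependence of $Kw$ should preclude the $p^{1/p^n}$-ramification tower that a non-discrete archimedean value group would force on it, but packaging this cleanly is the crux. A secondary obstacle is verifying (2) and (5) in the algebraically closed residue case, i.e.\ translating the absence of Artin-Schreier extensions of $K$ into both defectlessness of $v$ and the inclusion $[-v(p),v(p)]\subseteq p\cdot vK$.
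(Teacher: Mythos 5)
Your skeleton matches the paper's proof exactly: trivial valuation in the real closed/algebraically closed case, Lemma \ref{L:Jahnke} otherwise, and in the finite-residue-field case a coarsening by the convex subgroup generated by $v(p)$ whose residue field is $\wp$-adically closed. But the proposal has genuine gaps precisely at the points you yourself flag as ``obstacles'' and ``the crux''. Conditions (2) and (5) in the infinite-residue-field case are not established: you propose to derive both from Artin--Schreier closedness via a Hensel-lifting argument, but this is only a heuristic. Defectlessness is not a formal consequence of $K$ having no separable degree-$p$ extensions (defect extensions need not be Galois, and the standard route goes through algebraic maximality plus the Kaplansky conditions, which also involve the value group and residue field); the paper instead cites \cite[Theorem 4.3.2]{JohnDPMin} for (2) and \cite[Lemma 2.7]{HaHaQE} for (5), and your argument would essentially have to reprove those results.

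In the finite-residue-field case there is a concrete error: the convex subgroup $H$ generated by $v(p)$ is \emph{not} archimedean ``by construction''. The smallest convex subgroup containing a positive element $\gamma$ is $\{\delta : |\delta|\le n\gamma \text{ for some } n\}$, and this contains every element infinitesimal with respect to $\gamma$; it is archimedean only if there are none, which is exactly what needs to be proved. The missing ingredient --- which the paper supplies via \cite[Proposition 5.13]{HaHa} --- is that $[0,v(p)]$ is \emph{finite}, which simultaneously makes $(K,v)$ a $p$-valued field and forces $H$ to be discrete of rank one; the conclusion that the coarsened residue field is $\wp$-adically closed then follows from \cite[Section 2.2, Theorem 3.1]{PR84} rather than from an $H\equiv\mathbb{Z}$ argument you would still have to complete. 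Your observation that $\mathrm{char}(K)=0$ in this case, and that (2) and (5) are automatic for the coarsening $w$ since $\mathrm{char}(Kw)=0$, are both correct. So the plan is the right one, but as written it defers the three substantive verifications that constitute the actual content of the proof.
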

\begin{proof}
	Let $K$ be an infinite strongly dependent field. If $K$ is real closed or algebraically closed, take $v$ to be the trivial valuation, and there is nothing to prove. So we assume this not to be the case. By Lemma \ref{L:Jahnke} there exists a henselian valuation $v$ on $K$ such that $Kv$ is real closed, algebraically closed or finite. 
	
	By \cite[Theorem 4.3.2]{JohnDPMin} $(K,v)$ is defectless and by Theorem 5.14 of the same paper $(K,v)$ is strongly dependent. Thus, $vK$ is strongly dependent. If $Kv$ is infinite then Conjecture \ref{NewConj}(5) follows from \cite[Lemma 2.7]{HaHaQE}. 
	
	Assume $Kv$ is finite. Thus $[0,v(p)]$ is finite by \cite[Proposition 5.13]{HaHa} and so $(K,v)$ is a $p$-valued field. Let $\Delta$ is the convex subgroup of $vK$ generated by $v(p)$. Then the residue field, $K_1$, associated with the coarsening $w:K\to vK/\Delta$ is $\wp$-adically closed by 
\cite[Section 2.2, Theorem 3.1]{PR84} and elementary equivalent to a finite extension of $\mathbb{Q}_p$ by Fact \ref{F:p-adically-elem-fiQp} (i.e. a non-archimedean local field of characteristic $0$).
	
\end{proof}

Finally, we point out that Conjecture \ref{Shelah} has further implications on strongly dependent fields: 

\begin{proposition}
	Assume Conjecture \ref{Shelah}. Then every strongly dependent field has finite dp-rank. 
\end{proposition}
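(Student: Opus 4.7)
The plan is to leverage Proposition \ref{P:shelah-implies-new} directly: under Conjecture \ref{Shelah}, any infinite strongly dependent $K$ admits a henselian defectless valuation $v$ with $Kv$ algebraically closed, real closed, or elementarily equivalent to a finite extension of some $\mathbb{Q}_p$, with $vK$ strongly dependent, and satisfying the mixed-characteristic condition (5) of Conjecture \ref{NewConj}. If the valuation supplied is trivial then $K$ is already one of these fields and is dp-minimal, so I focus on the non-trivial case.

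The next step is to observe that both ``sorts'' carry finite dp-rank. In each of the listed cases the residue field $Kv$ is dp-minimal. For the value group, the hypothesis that $vK$ is strongly dependent gives, by the cited characterization in \cite[Theorem 4.20]{HaHa} (or \cite[Section 6]{farre}), that $vK$ has finite spines and only finitely many primes $p$ with $[vK:p\,vK]=\infty$; this structural data is exactly what yields finite dp-rank for an ordered abelian group, so $\dprk(vK)<\aleph_0$. Then I would pass to an $\aleph_1$-saturated elementary extension to equip $(K,v)$ with an angular component map, and use condition (5) together with \cite[Theorem 1]{HaHaQE} to conclude that $(K,v,ac)$ eliminates field quantifiers in the Denef--Pas language.

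Once we are in this setting, I would invoke a quantitative strengthening of Fact \ref{F:transfer-strongly}: under the same QE hypotheses, $\dprk(K,v,ac)$ is bounded (additively or at worst polynomially) by $\dprk(Kv)$ and $\dprk(vK)$. Since the former is $\le 1$ and the latter is finite, $\dprk(K,v,ac)<\aleph_0$, and hence $\dprk(K)$ is finite as desired. The main obstacle is precisely this last step: Fact \ref{F:transfer-strongly} as stated only transfers the qualitative property of strong dependence, not a finite numerical bound. A close reading of Shelah's \cite[Claim 1.17(2)]{Sh863} (which is really a bound on $\kappa_{\mathrm{ict}}$ of the whole theory in terms of the sorts), combined with the relative QE of \cite[Theorem 5]{DelHenselian}, should yield the quantitative version; alternatively one can try to extract it from subsequent work on dp-rank in henselian valued fields. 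Matching the bookkeeping to the three-sorted Denef--Pas language is the only real work.
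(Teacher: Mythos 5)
Your skeleton matches the paper's proof exactly: reduce via Proposition \ref{P:shelah-implies-new} to a henselian defectless $v$ with dp-minimal residue field and strongly dependent (hence finite dp-rank, by \cite[Theorem 1]{HaHa}) value group, pass to an $\aleph_1$-saturated model to get an angular component map, use condition (5) and \cite[Theorem 1]{HaHaQE} to obtain field quantifier elimination, note $\dprk(K)\le\dprk(K,v,ac)$, and then transfer finiteness of dp-rank from the two sorts to the whole structure. But the step you yourself flag as the ``main obstacle'' is a genuine gap, and the route you sketch for closing it is the wrong one. Shelah's \cite[Claim 1.17(2)]{Sh863} controls $\kappa_{\mathrm{ict}}$, i.e.\ strong dependence; a ``close reading'' of it will at best give you that $(K,v,ac)$ is strongly dependent, which does not rule out dp-rank $\ge n$ for every $n$. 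Finite dp-rank is strictly stronger than strong dependence, and that distinction is precisely the content of the proposition, so no quantitative version can be extracted from that claim without an essentially new argument.

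What the paper actually uses to close the gap is Chernikov's machinery: in equicharacteristic $0$, \cite[Theorem 7.6(2)]{Chernikov} directly, and in general \cite[Lemma 7.12]{Chernikov}, which bounds the depth of inp-patterns whose formulas have the special shape $\bigvee_{i<n}(\xi_i(v(x-c_i^j),d_i^j)\land \rho_i(ac(x-c_i^j),e_i^j))$ with $\xi_i$ in the language of ordered abelian groups and $\rho_i$ in the language of rings, in terms of $\dprk(vK)$ and $\dprk(Kv)$ (here stable embeddedness of the two sorts, which follows from the field quantifier elimination, is needed so that their pure dp-ranks agree with the induced ones). The remaining work is to show that \emph{every} formula is equivalent to one of this shape: by cell decomposition in equicharacteristic $0$, and in general by field quantifier elimination plus \cite[Theorem 5]{DelHenselian}, which shows every type over a model is isolated by such formulas, whence the claim by compactness. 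Your instinct that the bound should be additive is correct (Sinclair proves $\dprk(K,v,ac)=\dprk(vK)+\dprk(Kv)$ when $Kv$ is infinite), but as written your proof defers exactly the step that carries the mathematical content.
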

\begin{proof}
	Since the conclusion of the lemma is well known for algebraically closed, real closed and finite fields, we may assume that this is not the case. Fix a strongly dependent field $K$. Since $\dprk(K)$ is invariant under elementary equivalence, there is no harm assuming that $K$ is $\aleph_1$-saturated. Fix  a henselian valuation $v$ on $K$, as provided by Conjecture \ref{NewConj}. If $v$ is trivial, there is nothing to show. 
	
	So $K$ is an $ac$-valued field. By \cite[Theorem 5.14]{HaHa} we know that $(K,v)$ is strongly dependent, and as in the paragraph after Fact \ref{F:transfer-strongly}, $(K,v,ac)$ is strongly dependent. By definition, the dp-rank of $(K,v,ac)$ is at least that of $(K,v)$, so it will suffice to prove that the dp-rank of $(K,v,ac)$ is finite. 
	
	In case $(K,v)$ is of equi-characteristic $0$, the lemma follows from \cite[Theorem 7.6(2)]{Chernikov}, the fact that $vK$ has finite dp-rank \cite[Theorem 1]{HaHa} and the dp-minimality of $Kv$. 
	
	The general case follows from, essentially, the same argument. Here are the details: Since $(K,v)$ is strongly dependent and $Kv$ is infinite, $(K,v,ac)$ eliminates field quantifiers (see \cite[Theorem 1]{HaHaQE}, it is a model of $T_1$ in the notation there) and it follows that $Kv$ and $vK$ are stably embedded. So their respective dp-ranks (as pure structures) are the same as their dp-ranks with the structure induced from $(K,v,ac)$. In particular, $\dprk(vK)<\infty$ by \cite[Theorem 1]{HaHa} and $\dprk(Kv)\le 1$ by the choice of $v$. 
	
	It follows from \cite[Lemma 7.12]{Chernikov} that there is no inp-pattern of depth $\omega$ whose formulas are of the form $\bigvee\limits_{i<n} (\xi_i(x)\land \rho_i(x))$, where $\xi_i(x):=\bigwedge \xi_i^j(v(x-c_i^j), d^j_i))$ with $\xi_i^j$ in the language of ordered abelian groups, and $\rho_i:=\bigwedge \rho_i^j(ac(x-c_i^j), e_i^j)$ with $\rho_i^j$ in the language of rings. 
	
	In the equi-characteristic 0 case, the result now follows from cell decomposition, which implies that any formula is equivalent to one as above. In general, it follows from elimination of field quantifiers, and \cite[Theorem 5]{DelHenselian} any type over a model is isolated by formulas as above. By compactness, every formula is equivalent to one of the above form, so the result follows. 	
	
\end{proof}

\begin{remark}
	In Sinclair's PhD thesis he shows that, in fact, if $(K,v,ac)$ is a strongly dependent henselian valued field with $Kv$ infinite and with an angular component map, then $\dprk(K,v,ac)=\dprk(vK)+\dprk(Kv)$ where on the left hand side the dp-rank is computed with respect to the Denef-Pas language, and on the right hand side it is computed with respect to the language of ordered abelian groups and the language of rings respectively \cite[Theorem 2.2.7]{Peter}. 
\end{remark}

It follows from \cite[Corollary 4.4]{HaHaQE} that, combined with the main result of \cite{HaHa}, Conjecture \ref{NewConj} determines all possible first order theories of strongly dependent fields. 
The following is a special case of an unpublished result of S. Anscombe and the third author:\\

\begin{theorem}
	Let $K$ be an infinite strongly dependent field and let $v_K$ be the (possibly trivial) canonical henselian valuation on $K$. Assuming Shelah's conjecture \ref{Shelah}, one the following holds: 
\end{theorem}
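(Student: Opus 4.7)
The plan is to follow the dichotomy built into the definition of the canonical henselian valuation $v_K$ and, in each branch, to invoke Conjecture \ref{Shelah} at the residue field level in order to pin down the theory of $Kv_K$. First I would dispose of the trivial cases: if $K$ is algebraically closed or real closed, then $v_K$ may be taken trivial and the classification is immediate. From now on I assume $K$ is neither, and apply Lemma \ref{L:Jahnke} to produce a non-trivial henselian valuation $w$ on $K$ whose residue field $Kw$ is algebraically closed, real closed, or finite; in particular $K$ is henselian and $v_K$ itself is non-trivial.

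Next I would split according to the two clauses of the definition of $v_K$. If $K$ admits a henselian valuation with separably closed residue field, then $v_K$ is by definition the coarsest such; since $(K,v_K)$ is strongly dependent by \cite[Theorem 2]{HaHa}, $Kv_K$ is a separably closed strongly dependent field and hence, by \cite[Proof of Claim 5.40]{Sh863}, must be algebraically closed. Otherwise $v_K$ is the finest henselian valuation on $K$, so the $w$ supplied by Lemma \ref{L:Jahnke} is a coarsening of $v_K$; writing $v_K$ as the composition of $w$ with a henselian valuation $w'$ on $Kw$, one reads off $Kv_K=(Kw)w'$. Here $Kw$ cannot be algebraically closed (else $w$ itself would land in the previous branch), so $Kw$ is real closed or finite. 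In the real closed case, $w'$ is a henselian valuation on a real closed field and therefore has real closed residue field, so $Kv_K$ is real closed. In the finite case $w'$ must be trivial, whence $Kv_K=Kw$ is finite; coarsening $v_K$ by the convex subgroup of $v_K K$ generated by $v_K(p)$, exactly as in the proof of Proposition \ref{P:shelah-implies-new}, and appealing to Fact \ref{F:p-adically-elem-fiQp}, then realises $K$ as elementarily equivalent to a finite extension of $\mathbb{Q}_p$.

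The main obstacle is the careful bookkeeping of the compositions of henselian places, together with the verification that strong dependence is inherited by each residue field one passes to, so that Conjecture \ref{Shelah} can be legitimately applied on $Kv_K$ rather than on some proxy. A subtler point is making sure that the resulting list of cases is both exhaustive and mutually exclusive: one must check, in particular, that the finite-residue branch does not secretly collide with the ``local field of characteristic $0$'' branch at the level of $v_K$ itself, i.e.\ that the finest henselian valuation is indeed the one with finite residue field rather than a proper coarsening. Exhaustiveness then reduces to Lemma \ref{L:Jahnke} once the two trivial cases have been peeled off.
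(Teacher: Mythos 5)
Your sketch correctly reproduces the first stage of the argument: using Lemma \ref{L:Jahnke} and the dichotomy in the definition of $v_K$ to pin down $Kv_K$ as algebraically closed, real closed, or finite (the paper does this by citing the proof of Lemma \ref{L:Jahnke} directly, plus \cite[Theorem 5.14]{HaHa} for strong dependence of $(K,v_K)$). But this is only the residue-field bookkeeping; the theorem's actual content is the list of elementary equivalences $K\equiv k((\Gamma))$, and your sketch never gets there. You nowhere invoke an Ax--Kochen--Ershov transfer (in the paper, \cite[Corollary 4.4]{HaHaQE} and elimination of field quantifiers in the Denef--Pas language) to pass from ``$Kv_K$ and $v_KK$ are such-and-such'' to ``$K$ is elementarily equivalent to the corresponding Hahn series field''. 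Relatedly, your ``$Kv_K$ algebraically closed'' branch actually covers three of the four cases --- equicharacteristic $0$ (case (1)), equicharacteristic $p$ (case (2), where one must additionally establish that $(K,v_K)$ is algebraically maximal Kaplansky via \cite[Lemma 5.12]{HaHaQE}), and mixed characteristic with infinite residue field (case (4)) --- and you treat none of them. Case (4) is the genuinely hard one: it needs the three-step place decomposition $K\to L\to K'\to Kv_K$ through the convex subgroups $\Delta_0<\Delta_1$, the identification of $L$ as a rank~$1$ mixed-characteristic tame Kaplansky field (\cite[Theorem 5.13]{HaHa}), and the uniqueness of such fields up to elementary equivalence (\cite[Corollary 2.15]{FKVBad}, \cite[Theorem 5.3]{KuPaRo}) to make the description in (4) meaningful.

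There is also a concrete error in your finite-residue branch: you conclude that $K$ itself is elementarily equivalent to a finite extension of $\mathbb{Q}_p$. What the argument of Proposition \ref{P:shelah-implies-new} gives is that the \emph{residue field} of the coarsening of $v_K$ by $\Delta_1$ (the minimal convex subgroup containing $v_K(p)$) is $\wp$-adically closed, hence elementarily equivalent to a finite extension $Q$ of $\mathbb{Q}_p$; one then still has to apply Ax--Kochen--Ershov to the coarsened valuation to obtain $K\equiv Q((\Gamma))$ with $\Gamma=v_KK/\Delta_1$, which is what clause (3) asserts. Finally, the ``moreover'' direction (that every field described in (1)--(4) is strongly dependent, via \cite[Lemma 4.8]{HaHaQE}) is not addressed at all. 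In short: the case split is sound and matches the paper's, but the sketch stops at identifying $Kv_K$ and so misses the transfer principles and the Kaplansky/uniqueness input that constitute the bulk of the proof.
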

\begin{enumerate}
	\item $Kv_K$ is real closed or algebraically closed of characteristic $0$ and $K\equiv \mathbb R((\Gamma))$ or $K\equiv \mathbb C((\Gamma))$ (as fields) where $\Gamma\equiv v_KK$ (as ordered abelian groups) and $\Gamma$ is strongly dependent. 
	\item $\mathrm{char }K=p>0$, $Kv_K$ is algebraically closed, $(K,v_K)$ is tame Kaplansy and $K\equiv \overline{\mathbb F}_p ((\Gamma))$ (as fields) where $\Gamma\equiv v_KK$ (again, as ordered abelian groups) and $\Gamma$ is strongly dependent. 
	\item $\mathrm{char}(K,Kv_K)$ is of mixed characteristic $(0,p)$ and $Kv_K$ is finite. 
	Then $K\equiv Q((\Gamma))$ (as fields) where $Q$ is a finite extension of $\Qq_p$ and $\Gamma$ is strongly dependent. 
	\item $\mathrm{char}(K,Kv_K)$ is of mixed characteristic $(0,p)$ and $Kv_K$ is infinite. In that case $Kv_K\models ACF_p$ and $K\equiv L((\Gamma))$ (as fields) where $L$ is a field admitting a rank 1 valuation $v$ turning it into a mixed characteristic tame Kaplansky field, with residue field as in (2) above and $\Gamma$ is strongly dependent
\end{enumerate}

Moreover, any of the fields described in clauses $(1)-(4)$ is strongly dependent.
\begin{proof}
	Note that by \cite[Theorem 5.14]{HaHa} $(K,v_K)$ is strongly dependent. 

In case $\mathrm{char}(K,Kv_K)=(0,0)$, 
it follows from (the proof of) Lemma \ref{L:Jahnke} that $Kv_K$ is real closed or algebraically closed. That $\Gamma:=vK_K$ 
is strongly dependent follows from \cite[Proposition 5.11]{HaHa}. Since $Kv_K$ is strongly dependent by assumption, the fact that $K$ is as in (1) follows from Ax-Kochen-Ershov (see also, \cite[Corollary 4.4]{HaHaQE}). 

Assume $K$ is strongly dependent, Shelah's conjecture holds and 
$\mathrm{char}(K,Kv_K)=(p,p)$
for a prime $p>0$. The proof that then $K$ is as described in (2) is similar, 
noting that by \cite[Lemma 5.12]{HaHaQE} $(K,v_K)$ is algebraically maximal Kaplansky (see \cite[Definition 5.10]{HaHa} for the details). 
	
In the case where $\mathrm{char}(K,Kv_K)=(0,p)$ is mixed, we separate into cases
depending on whether $Kv_K$ is finite or infinite.
In case $Kv_K$ is finite, consider the composition of places 
\[
K\xrightarrow{\Delta/\Delta_1}L\xrightarrow{\Delta_1/\Delta_0} K'\xrightarrow{\Delta_0}Kv_K.
\]
where $\Delta:=v_KK$ and $\Delta_0<\Delta_1\le \Delta$, where $\Delta_0$ is the maximal convex subgroup not containing $v_K(p)$ and $\Delta_1$ is the minimal convex subgroup containing $v_K(p)$. 
Since $Kv_K$ is finite, the proof of Proposition \ref{P:shelah-implies-new}
shows that $\Delta_0$ is trivial and $L$ is 
$\wp$-adically closed. That $K$ is as
in (3) now follows by Ax-Kochen-Ershov applied to $K\xrightarrow{\Delta/\Delta_1} L$. That $\Gamma:=\Delta/\Delta_1$ is strongly dependent follows, again, by \cite[Proposition 5.11]{HaHa}. 

For $Kv_K$ infinite, 
we consider the same decomposition as above. As before, we have 
$K\equiv L((\Delta/\Delta_1))$. By the equicharacteristic $p$ case above, 
$K'\equiv \overline{\mathbb F}_p((\Delta_0))$ and $L$ is a rank 1, mixed characteristic tame Kaplansky field (by \cite[Theorem 5.13]{HaHa}). Since algebraically maximal Kaplansky fields admit elimination of field quantifiers in the Denef-Pas language, the theory of $L$ is completely determined by the theory of $\Delta_1/\Delta_0$ (in the language of ordered abelian groups) and of $K'$ (in the language of rings). Moreover, by \cite[Corollary 2.15]{FKVBad} and \cite[Theorem 5.3]{KuPaRo} for every $p$-divisible ordered abelian group $\Gamma$, and any strongly dependent field $k$ of characteristic $p$,  there exists a (up to elementary equivalence) unique tame Kaplansky field $(L,v)$ of mixed characteristic $(0,p)$ with $vL\equiv\Gamma$ and $Lv\equiv k$.

The moreover follows by \cite[Lemma 4.8]{HaHaQE}.
\end{proof}

Recall that a field $K$ is called bounded if $K$ has only finitely many Galois extensions of degree $n$ for each $n$. The above theorem shows that --
assuming Shelah's conjecture -- boundedness is the dividing line between dp-minimal and non-dp-minimal strongly dependent fields:
\begin{corollary}
Assume Shelah's conjecture \ref{Shelah} and let $K$ be a strongly dependent field. Then $K$ is dp-minimal if and only if $K$ is bounded.
\end{corollary}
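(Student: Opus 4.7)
My plan is to apply the preceding classification theorem and reduce, in each of its four cases, to the comparison of dp-minimality with boundedness via the value group. Under Shelah's conjecture, by coarsening $v_K$ if necessary as in the proof of Proposition \ref{P:shelah-implies-new}, we may arrange a henselian valuation $w$ on $K$ whose residue field $k$ is bounded (algebraically closed, real closed, a finite extension of $\Qq_p$, or a rank-one mixed-characteristic tame Kaplansky field), whose value group $\Gamma$ is strongly dependent, and such that $w$ is tame (equicharacteristic $0$, or tame Kaplansky). Since boundedness of a field is preserved under elementary equivalence --- for each pair $(n, N)$, ``$K$ has at most $N$ separable extensions of degree $n$'' is first-order --- we may work with such an explicit $(K,w)$ directly.

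The first step is that, since $k$ is already dp-minimal in each case and the remaining hypotheses of Johnson's Theorem \ref{classification} are met by construction, $K$ is dp-minimal iff $[\Gamma : n\Gamma]<\infty$ for every $n \geq 1$; by \cite[Proposition 5.27]{JaSiWa2015} the latter is exactly dp-minimality of $\Gamma$ as a pure ordered abelian group. The second step is to show that this same finiteness condition characterizes boundedness of $K$. In the tame setting the absolute Galois group of $K$ fits in a short exact sequence
\[
1 \to I \to G_K \to G_k \to 1,
\]
and Kummer theory at primes coprime to the residue characteristic, combined with control of the wild inertia by the tame Kaplansky condition, yields that $|K^\times/(K^\times)^n|$ is finite iff $|k^\times/(k^\times)^n|$ and $|\Gamma/n\Gamma|$ are both finite. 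Since $k$ is bounded in all four cases, $K$ is bounded iff $\Gamma$ is almost divisible. Chaining the two equivalences yields the corollary.

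The main obstacle I anticipate is the mixed-characteristic cases (3) and (4), where wild ramification at $p$ must be controlled: passing to the coarsening with $\wp$-adically closed residue field (case (3)) or rank-one tame Kaplansky residue field (case (4)) is exactly what eliminates the wild contributions from the top part of the valuation, so that the Kummer/exact-sequence count becomes uniform across all four cases. Making this precise --- in particular verifying that no extra degree-$p$ extensions come from below when lifting extensions of the coarsened residue field back up to $K$ --- is the one step where care is needed, but it is handled by the tameness hypothesis.
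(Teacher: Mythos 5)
Your overall route is the paper's: reduce dp-minimality to almost divisibility of the value group via Johnson's Theorem \ref{classification}, prove ``dp-minimal $\Rightarrow$ bounded'' through the exact sequence $1\to I\to G_K\to G_k\to 1$ with tameness killing the ramification group, and prove the converse by Kummer theory on a prime $q$ with $[vK:q(vK)]=\infty$ (after adjoining $\zeta_q$). Two steps, however, are not right as stated. First, you identify boundedness of $K$ with finiteness of the power-class groups $K^\times/(K^\times)^n$. Kummer theory only controls the \emph{abelian} (indeed cyclic prime-degree) extensions, so ``finite power classes for all $n$'' does not by itself yield finitely many Galois extensions of each degree; what the paper actually uses is the explicit computation $I(u/v)\cong\prod_{q\neq p}\mathbb{Z}_q^{r_q}$ with $r_q=\dim_{\mathbb{F}_q}vK/q(vK)$ together with triviality of the ramification group, so that $G_K$ is an extension of the small group $G_{Kv}$ by a topologically finitely generated group and is therefore small. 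Your mention of the exact sequence suggests you intend this, but as written the inference ``power classes finite $\Rightarrow$ bounded'' is a gap.

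Second, your reduction leans on the four-case classification theorem and asserts that the residue field $k$ of the coarsened valuation is bounded in all four cases, including the rank-one mixed-characteristic tame Kaplansky field of case (4). That field is neither algebraically closed nor elementarily equivalent to a local field, so Johnson's hypotheses are \emph{not} ``met by construction'' for your $w$ there, and its boundedness is not free: you would have to run the inertia-group argument again for the induced valuation, and the relevant indices live in the full value group $v_KK$, not just in the top quotient $\Gamma$. (Concretely: $K$ could fail to be dp-minimal, and fail to be bounded, because of $[\Delta_1/\Delta_0:q(\Delta_1/\Delta_0)]=\infty$ even when your $\Gamma=\Delta/\Delta_1$ is almost divisible, so the chain ``dp-minimal iff $\Gamma$ almost divisible iff bounded'' breaks.) The paper avoids this by working throughout with the valuation supplied by Theorem \ref{T:main} (whose residue field is algebraically closed or a local field) and, in mixed characteristic, coarsening only to residue characteristic $0$ and then analysing the induced valuation $\bar v$ on $Kw$ directly. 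Your argument can be repaired along those lines, but the repair is exactly the content you are currently waving at with ``handled by the tameness hypothesis.''
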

\begin{proof} 
Both directions are straightforward, although the proofs are lengthy.
We first use Johnson's classification to show that dp-minimal fields
are bounded. Let $K$ a dp-minimal field. Since finite fields are bounded,
we may assume that $K$ is infinite. Thus, 
by \cite[Theorem 9.7.2]{JohnDPMin} (more precisely, its rephrasing as 
Theorem \ref{classification} above), $K$ admits a defectless henselian 
valuation $v$ with residue field $Kv$ algebraically closed of 
characteristic $p$ or elementarily equivalent to a local field of
characteristic $0$. Moreover, we have $[vK : n(vK)] < \infty$ for all
$n \in \mathbb{N}$ and, in case 
$\mathrm{char}(Kv)=p \neq 0$, we have $[-v(p), v(p)] \subseteq
p(vK)$. Note that in each case, $Kv$ is bounded.
Let $K^\mathrm{alg}$ denote an algebraic closure of $K$, let $u$ denote the
(unique) extension of $v$ to $K^\mathrm{alg}$ and write $G_K$
for the absolute Galois group of $K$. We will argue using the
ramification subgroup $R(u/v)$ and the inertia subgroup $I(u/v)$ of $G_K$.
For an introduction to these groups and their properties, 
see  \cite[Ch. 5]{EnPr}.
We now split into cases depending on whether $(K,v)$ has mixed characteristic or not.
\begin{enumerate}
\item If $\mathrm{char}(K)=\mathrm{char}(Kv)$, 
we claim that $R(u/v)$ is trivial. This is obvious in case $\mathrm{char}(Kv)=0$ 
(see \cite[Theorem 5.3.3]{EnPr}), and follows from the fact that $K$ admits no Galois extensions of degree
divisible by
$p$ in case $\mathrm{char}(K)=p>0$ (\cite{KaScWa}) since $R(u/v)$ is a pro-$p$ group 
(once more \cite[Theorem 5.3.3]{EnPr}). 
Now, \cite[Theorem 5.3.3]{EnPr} implies that we have
$$I(u/v) \cong \prod_{q \neq p} \mathbb{Z}_q^{r_q},$$
where $p=\mathrm{char}(Kv)$ and for each prime $q \neq p$, $r_q$ is the $\mathbb{F}_q$-dimension 
of the quotient $vK/q(vK)$. Since $K$ is dp-minimal,the index $[vK:q(vK)]$ and thus $r_q$
is finite for all primes $q$. By \cite[Lemma 5.2.6]{EnPr}, there is an exact sequence
$$1 \ to I(v/w) \to G_K \to G_{Kv} \to 1.$$
Now, since $Kv$ is bounded, $G_{Kv}$ admits only finitely many continuous quotients of index $n$
for each $n \in \mathbb{N}$, and the same holds for $I(u/v)$. Thus,
by Galois correspondence, $K$ is bounded.
\item In case $\mathrm{char}(Kv)=p\neq \mathrm{char}(K)$, let $w$ denote the finest coarsening of $v$
of residue characteristic $0$, i.e.~the valuation corresponding to the convex subgroup of 
$vK$ generated by $v(p)$. Since we also have $[wK:n(wK)]$ finite for all $n \in \mathbb{N}$,  
the arguments given in case (1) imply that it suffices to show that $Kw$
is bounded. Let $\bar{v}$ denote the valuation induced by $v$ on $Kw$. It follows from the 
characterization of $v$ that $\bar{v}$ is defectless,
the value group $\bar{v}(Kw)$ is $p$-divisible and the residue field $(Kw)\bar{v}=Kv$ is
algebraically closed. Thus, $Kw$ admits no Galois extensions of degree divisible by $p$
(and hence the ramification group is trivial)
and is thus $Kw$ is bounded by the same reasoning as in (1).
\end{enumerate}

Now, assume Shelah's
conjecture holds and let $K$ be an infinite strongly dependent field. 
By Theorem \ref{T:main}, $K$ admits again a defectless henselian valuation 
$v$ with residue field
$Kv$ algebraically closed or elementarily equivalent to a local field of
characteristic $0$. Moreover, $vK$ is strongly dependent and in case 
$\mathrm{char}(Kv)=p \neq 0$, we have $[-v(p), v(p)] \subseteq
p(vK)$. In particular, if $\mathrm{char}(K)=p>0$, then $vK$ is $p$-divisible.
By Johnson's Theorem (see \cite[Theorem 9.7.2]{JohnDPMin}
or Theorem \ref{classification} above), 
$K$ is dp-minimal if we have $[vK : n (vK)] < \infty$ for all 
$n \in \mathbb{N}$. 
Thus, it suffices to 
show that if $[vK : n (vK)] = \infty$ for
some $n \in \mathbb{N}$, then $K$ is not bounded. This is a standard argument:
Assume that we have $[vK :n (vK)] = \infty$ for
some $n$, then we get $[vK :q(vK)] = \infty$ for some prime $q$.
Note that we necessarily have $q \neq \mathrm{char}(K)$.
If $K$ does not contain a primitive $q$th root of unity, consider the (Galois)
extension
$L=K(\zeta_q)$ and let $w$ denote the unique prolongation of $v$ to $L$.
Since $[wL:vK]$ is finite, we also have $[wL :q(wL)] = \infty$. 
This implies that the index $[L^\times: (L^\times)^q]$ is infinite.
By Kummer theory (see \cite[Ch. VI \S 8]{Lanalg}), there is a one-to-one 
correspondence between the set of 
finite 
subgroups of $L^\times/(L^\times)^q$ (of which there are infinitely many) 
and the cyclic extensions of $L$ of degree $q$.
Thus, $L$ has infinitely many Galois extensions of degree $q$ and hence $K$
has infinitely many Galois extensions of degree $[L:K]\cdot q$.
\end{proof}

\noindent\textbf{Acknowledgement}  We would like to thank S. Anscombe for allowing us to include the discussion concluding this note, for her comments and suggestions.

\bibliographystyle{plain}
\bibliography{harvard}
\end{document}